\newtheorem{theorem}{Theorem}[section]
\newtheorem{lemma}{Lemma}[section]
\newtheorem{example}{Example}[section]
\newtheorem{assumption}{Assumption}
\numberwithin{equation}{section}
\title{An efficient iteration method to reconstruct the drift term from the final measurement}
\author[1]{Dakang Cen\thanks{cendakang@163.com}}
\author[1]{Wenlong Zhang\thanks{zhangwl@sustech.edu.cn}}
\author[2,3]{Zhidong Zhang\thanks{zhangzhidong@mail.sysu.edu.cn}}
\affil[1]{Department of Mathematics, Southern University of Science and Technology, Shenzhen, 518055, China}
\affil[2]{School of Mathematics (Zhuhai), Sun Yat-sen University, Zhuhai 519082, Guangdong, China}
\affil[3]{Guangdong Province Key Laboratory of Computational Science, Sun Yat-sen
University, Guangzhou 510000, Guangdong, China}
\begin{document}
\maketitle

\abstract{This work investigates the inverse drift problem in the one-dimensional parabolic equation with the final time data. The authors construct an operator first, whose fixed points are the unknown drift, and then apply it to prove the uniqueness. The proof of uniqueness contains an iteration converging to the drift, which inspires the numerical algorithm. To handle the ill-posedness of the inverse problem, the authors add the mollification on the data first in the iterative algorithm, and then provide some numerical results.\\

Keywords: inverse drift problem, uniqueness, monoticity, iterative algorithm.\\

AMS classification codes: 35K10, 35R30, 65M32
}
\section{Introduction.}

In this work, for the parabolic equation $(\partial_t-\Delta+q(x)\cdot\nabla+p(x))u=f$, we aim to recover the drift term $q$. The drift $q$ could be called as the first order term (since it is the coefficient of the gradient of solution), also we could see that this inverse problem is nonlinear. 
The nonlinearity and the high order make the inverse drift problem be challenging. The authors would use the method of monotone operator to solve the inverse problem, which may be technical. This article considers the one-dimensional case of this inverse problem, which could be regarded as a start of such approach. Also, we assume that the potential term $p$, the source $f$ and the boundary condition are controllable to ensure the approach works.      

The one-dimensional mathematical model is given as follows: 
\begin{equation}\label{PDE}
 \begin{cases}
  \begin{aligned}
   (\partial_t-\partial_x^2+q(x)\partial_x+C_p)u(x,t)&=f(x), &&(x,t)\in(0,1)\times(0,T];\\
   u_x(0,t)&=b_1,&&t\in(0,T];\\
   u_x(1,t)&=b_2(t),&&t\in(0,T];\\
   u(x,0)&=v(x),&&x\in(0,1).\\
  \end{aligned}
 \end{cases}
\end{equation}
The source $f$, the potential constant $C_p$, the initial condition $v(x)$ and the boundary conditions $b_1,\ b_2$ are given. Note that the left boundary $b_1$ is constant. We aim to recover the unknown drift term $q(x)$. The measurement is the final time data 
\begin{equation}\label{data}
 g(x):=u(x,T).
\end{equation}
So the interested inverse problem can be described as  
\begin{equation}\label{inverse_problem}
 \text{to use the data}\ g\ \text{to recover the drift}\ q. 
\end{equation}

To solve the nonlinear inverse problem as \eqref{inverse_problem}, the monotone operator method could be an appropriate choice. The first step of such approach is to construct an operator from equation \eqref{PDE} and data \eqref{data}, whose fixed point is the solution of the inverse problem. To make sure the operator to be well-defined, we choose the Neumann boundary. Then some properties of the operator are proved, like monotonicity and the uniqueness of the fixed point (see Lemma \ref{lemma_monotone} and Theorem \ref{theorem_unique}, respectively). One of the advantages of this method is that the proof of uniqueness of the inverse problem contains an iteration converging to the unknown drift, which could be used in the numerical reconstruction. See the proof of Theorem \ref{theorem_unique} for details.

Drift terms have rich context and significance in physics models.
In terms of transport phenomena, the coupling effect of drift and diffusion in pipe flow was investigated in \cite{Taylor:1953}. For finance problems, the drift term represents the risk-free interest rate in Black-Scholes model \cite{BS:1973}. In chemotaxis models, it describes the orientation of the chemical signal \cite{KS:1971}. In addition, drift describes directional motion under the influence of potential energy and migration speed of the population for the Fokker-Planck equation \cite{Risken:1989} and the population diffusion model \cite{GurtinMacCamy:1977}, respectively.
On the other hand, the reconstruction of drift terms is also of interest in application scenarios. However, due to the nonlinearity and the high order property, the research on inverse drift problems seems relatively scarce. 
 The inverse problem of determining the parameter of the expected growth rate is considered for the backward parabolic equation \cite{KorolevKuboYagola:2012}. Subsequently, the uniqueness of the solution to an inverse problem with respect to the real drift is proved by applying microlocal analysis in \cite{DoiOta:2018}.
 
For the literature that using the monotone operator method to solve inverse problems, we collect several works as follows.  \cite{Jones:1962, Jones:1963} are the early works of such method. The author builds an integral equation whose fixed point is the unknown coefficient. Then the uniqueness and stability results of the inverse problem are given. \cite{Zhang:2016,Zhang:2017} recover the time-dependent second order term of the fractional diffusion equation, proving the uniqueness result. The authors in \cite{ZhangZhangZhou:2022} consider the inverse potential problem of the parabolic equation. They prove the uniqueness and conditional stability of the inverse problem, and provide several numerical results and the corresponding error estimate. \cite{Duchateau:1995} investigates the coefficient-to-data mappings associated with unknown coefficient inverse problems for nonlinear parabolic partial differential equations. It shows that the mapping is monotone and invertible.

The manuscript is structured as follows. In section \ref{section_pre}, we collect several preliminary knowledge, as the maximum principles of elliptic and parabolic equations, and the positivity results of the solution. In section \ref{section_unique}, we prove the uniqueness theorem--Theorem \ref{theorem_unique}, which is the main result of this work. One monotone operator is constructed for the proof. In section \ref{section_num}, the authors consider the numerical reconstructions and provide several numerical results. The concluding remarks and future works are discussed in section \ref{section_con}.

\section{Preliminaries.}\label{section_pre}
Some preliminary results would be provided in this section. 
\subsection{Maximum principles.}
The next two lemmas are the maximum principles of one-dimensional elliptic and parabolic equations, respectively, which would be used in the later proofs. 
\begin{lemma}[\cite{Evans:1998}]\label{lemma_max_elliptic}
We set 
$$Lu=(-\partial_x^2+a(x)\partial_x+c(x))u, \quad c\ge 0$$ and $a,\ c$ are smooth enough.   
\begin{itemize}
 \item [(a)] If $Lu\le 0$ in $(0,1)$ and $u$ attains a nonnegative maximum over $[0,1]$ at an interior point, then $u$ is a constant in $(0,1)$.
 \item [(b)] If $Lu\ge 0$ in $(0,1)$ and $u$ attains a nonpositive minimum over $[0,1]$ at an interior point, then $u$ is a constant in $(0,1)$.
\end{itemize}
\end{lemma}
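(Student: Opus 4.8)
The statement is the one-dimensional case of Hopf's strong maximum principle and could simply be quoted from \cite{Evans:1998}; the plan here is instead to record the short self-contained argument, which in dimension one avoids the annular barriers of the general theory. First I would reduce part (b) to part (a) by applying (a) to $-u$: if $Lu\ge0$ and $u$ has a nonpositive interior minimum, then $-u$ satisfies $L(-u)=-Lu\le0$ and has a nonnegative interior maximum, so (a) makes $-u$, and hence $u$, constant. For part (a), the first ingredient is the one-dimensional weak maximum principle: if $Lw<0$ on $(0,1)$, then $w$ cannot take a nonnegative value at an interior local maximum $x_\ast$, since there $w'(x_\ast)=0$ and $w''(x_\ast)\le0$ would force $Lw(x_\ast)=-w''(x_\ast)+c(x_\ast)w(x_\ast)\ge0$; hence $\max_{[0,1]}w\le\max\{0,w(0),w(1)\}$.

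The main ingredient is Hopf's boundary point lemma on a subinterval $[\gamma,\beta]\subset(0,1)$: if $Lu\le0$ there, $u<u(\beta)=:m$ on $[\gamma,\beta)$, and $m\ge0$, then $u'(\beta)>0$. I would prove this with the explicit barrier $z(x)=e^{-\mu(x-\gamma)}-e^{-\mu(\beta-\gamma)}$, which vanishes at $\beta$, is positive on $[\gamma,\beta)$, and has $z'(\beta)<0$; a direct computation gives $Lz<0$ on $[\gamma,\beta]$ once $\mu$ is taken large in terms of $\sup|a|$ and $\sup|c|$. For small $\varepsilon>0$ the function $w:=u-m+\varepsilon z$ then satisfies $Lw<0$ on $(\gamma,\beta)$ (this is where $m\ge0$ and $c\ge0$ enter, via $L(m)=c\,m\ge0$) and $w\le0$ at both endpoints, so the weak principle gives $w\le0$ on $[\gamma,\beta]$ with equality at $\beta$; differentiating at that one-sided maximum yields $0\le w'(\beta)=u'(\beta)+\varepsilon z'(\beta)$, i.e.\ $u'(\beta)\ge-\varepsilon z'(\beta)>0$.

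Finally I would run the standard propagation argument. Set $M:=\max_{[0,1]}u\ge0$, attained at some $x_0\in(0,1)$, and suppose toward a contradiction that $u\not\equiv M$ on $(0,1)$, so that $\Omega:=\{x\in(0,1):u(x)<M\}$ is open, nonempty, and $\ne(0,1)$. Pick $z_1\in\Omega$; after the reflection $x\mapsto1-x$, which preserves the form of $L$ (it keeps $c\ge0$), we may assume $z_1<x_0$. Let $\bar z:=\inf\{x\in[z_1,x_0]:u(x)=M\}$; then $z_1<\bar z\le x_0<1$, $u<M$ on $[z_1,\bar z)$, and $u(\bar z)=M\ge0$. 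Hopf's lemma on $[z_1,\bar z]$ gives $u'(\bar z)>0$, contradicting $u'(\bar z)=0$ at the interior maximum point $\bar z$. Hence $\Omega=\emptyset$, i.e.\ $u\equiv M$ on $(0,1)$.

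The only points requiring care are the sign bookkeeping in the barrier estimate for $Lz$ and the check that the point $\bar z$ at which Hopf's lemma is invoked lies strictly inside $(0,1)$; both are handled above, and $u\in C^2(0,1)$ is all the regularity used.
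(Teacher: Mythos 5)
Your argument is correct. Note, though, that the paper offers no proof of this lemma at all: it is quoted directly from \cite{Evans:1998}, so there is no in-paper argument to compare against. What you have written is essentially the standard proof from that reference (Hopf boundary point lemma via an exponential barrier, then propagation of the maximum set), specialized to one dimension, and all the delicate points are handled correctly: the barrier computation $Lz\le e^{-\mu(x-\gamma)}(-\mu^2+\|a\|_{\infty}\mu+\|c\|_{\infty})<0$ for $\mu$ large works because $a,c$ are bounded on the compact subinterval $[\gamma,\beta]\subset(0,1)$; the sign hypotheses $c\ge0$, $m\ge0$ enter exactly where you say, through $L(u-m)=Lu-cm\le0$ and through the weak principle applied to $w=u-m+\varepsilon z$; the point $\bar z$ lies strictly inside $(0,1)$, so $u'(\bar z)=0$ gives the contradiction with $u'(\bar z)>0$; and the reduction of (b) to (a) via $-u$ is immediate. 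One could shorten the 1D case further (e.g.\ by an integrating-factor/Sturm comparison argument that avoids the barrier), but your route is the one the cited source takes and is perfectly adequate for the regularity assumed here.
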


\begin{lemma}[\cite{Evans:1998}]\label{lemma_max_parabolic}
 We use the same elliptic operator $L$ in Lemma \ref{lemma_max_elliptic}. 
 \begin{itemize}
  \item [(a)] If $u_t+Lu\le 0$ in $(0,1)\times (0,T]$ and $u$ attains a nonnegative maximum over $[0,1]\times[0,T]$ at a point $(x_0,t_0)\in (0,1)\times(0,T]$, then 
  $u$ is a constant on $(0,1)\times(0,t_0]$.
  \item [(b)] If $u_t+Lu\ge 0$ in $(0,1)\times (0,T]$ and $u$ attains a nonpositive minimum over $[0,1]\times[0,T]$ at a point $(x_0,t_0)\in (0,1)\times(0,T]$, then 
  $u$ is a constant on $(0,1)\times(0,t_0]$.
 \end{itemize}
\end{lemma}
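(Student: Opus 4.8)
The plan is to treat this as the classical strong maximum principle for a uniformly parabolic operator in one space dimension, which I would prove in two stages: the weak maximum principle, then the strong (Nirenberg) refinement. At the outset note that (b) reduces to (a) applied to $-u$: we have $(-u)_t+L(-u)=-(u_t+Lu)\le0$, the hypothesis $c\ge0$ is unchanged, and a nonpositive minimum of $u$ at $(x_0,t_0)$ is a nonnegative maximum of $-u$ there, so ``$-u$ is constant'' yields ``$u$ is constant''. Hence it suffices to prove (a). Write $Q_T=(0,1)\times(0,T]$ and let $\Gamma=([0,1]\times\{0\})\cup(\{0,1\}\times[0,T])$ denote the parabolic boundary.

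\emph{Weak maximum principle.} First I would dispose of strict subsolutions: if $u_t+Lu<0$ in $Q_T$ and $u$ attained a nonnegative maximum over $\overline{Q_T}$ at an interior point $(x_0,t_0)$ with $t_0>0$, then there $u_x=0$, $u_{xx}\le0$ and $u_t\ge0$ ($u_t=0$ if $t_0<T$), so $u_t+Lu=u_t-u_{xx}+a\,u_x+c\,u=u_t-u_{xx}+c\,u\ge0$, contradicting $u_t+Lu<0$. For a general subsolution I would apply this to $u^\varepsilon:=u-\varepsilon t$, for which $u^\varepsilon_t+Lu^\varepsilon=(u_t+Lu)-\varepsilon-\varepsilon\,c(x)\,t\le-\varepsilon<0$; thus $u^\varepsilon$ has no nonnegative interior maximum, and letting $\varepsilon\downarrow0$ shows the nonnegative maximum of $u$ is attained on $\Gamma$.

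\emph{Strong maximum principle.} Put $M:=\max_{\overline{Q_T}}u\ge0$ and suppose it is attained at $(x_0,t_0)$ with $x_0\in(0,1)$, $t_0\in(0,T]$; the goal is $u\equiv M$ on $(0,1)\times(0,t_0]$. I would establish two Hopf-type propagation statements. (i) \emph{Spatial propagation on a time slice}: for fixed $t_1\le t_0$ the set $E_{t_1}:=\{x\in(0,1):u(x,t_1)=M\}$ is open in $(0,1)$; if some $x_1\in E_{t_1}$ had $u<M$ immediately to one side, one constructs a standard parabolic Hopf barrier on a thin region lying below and beside $(x_1,t_1)$ on which $u<M$ and whose closure touches $(x_1,t_1)$, and a comparison (weak maximum principle) on that region forces $u_x(x_1,t_1)\neq0$ --- impossible at an interior maximum. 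Since $E_{t_1}$ is also closed in $(0,1)$ by continuity and, for $t_1=t_0$, nonempty, connectedness of the interval $(0,1)$ forces $u(\cdot,t_0)\equiv M$. (ii) \emph{Backward-in-time propagation}: if $u(\cdot,t_1)\equiv M$ then $u(\cdot,t)\equiv M$ for $t$ slightly below $t_1$, again by a barrier on a thin parabolic strip beneath $\{t=t_1\}$; this is exactly where the forward/backward asymmetry of the heat operator enters, and why the conclusion only reaches down to $t_0$ and not up. Combining (i)--(ii), e.g.\ via a connectedness argument on $\{t\in(0,t_0]:u(\cdot,t)\equiv M\}$, gives $u\equiv M$ on $(0,1)\times(0,t_0]$.

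The main obstacle is the construction of the Hopf barriers with the correct geometry together with the sign bookkeeping: the exponential exponent must be taken large enough to dominate the $-\partial_x^2$ term while the first-order term $a(x)\partial_x$ is absorbed and $c\ge0$ is used, and the barrier region must be placed so that information propagates only backward in time. The remaining ingredients --- the second-derivative test, the $\varepsilon$-perturbation, continuity and connectedness, and the reduction of (b) to (a) --- are routine. Since $a,c$ are assumed smooth, one may alternatively invoke the general parabolic strong maximum principle in the form stated in \cite{Evans:1998}.
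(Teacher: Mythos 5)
The paper never proves this lemma at all: it is quoted directly from \cite{Evans:1998} (the strong maximum principle for a parabolic operator with zero-order coefficient $c\ge 0$), so there is no in-paper argument to compare against line by line. Your outline is nonetheless a correct route to the statement: the reduction of (b) to (a) via $-u$ is exact; the $\varepsilon$-perturbation proof of the weak maximum principle is standard; and the two propagation steps (openness of the contact set $\{u=M\}$ in each spatial slice via a Hopf-type barrier, backward-in-time propagation by a barrier placed below the slice, then a connectedness argument in $t$) are precisely the classical Nirenberg/Protter--Weinberger proof. It is worth noting that this is a genuinely different derivation from the one in the cited source: Evans obtains the parabolic strong maximum principle from the parabolic Harnack inequality rather than from Hopf barriers; the barrier route is more elementary and self-contained, while the Harnack route transfers more easily to general settings. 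Two small cautions about the parts you label ``standard'': at the final time $t_1=t_0=T$ one only knows $u_t\ge 0$ (not $u_t=0$) at the maximum point, so in the spatial-propagation step the touching ball must be arranged so that the outward direction at the contact point is purely spatial (or one first works in an interior time strip and exhausts $(0,T]$); and the barrier exponent must be chosen using bounds on $a$ and $c$, which is exactly where the smoothness assumptions on the coefficients enter. Since these constructions are textbook material, your closing option of simply invoking \cite{Evans:1998} is in fact what the paper does.
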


\subsection{Assumptions and the positivity result. }
Throughout the manuscript, we set the next assumptions be valid.
\begin{assumption}\label{assumption} 
The source $f$, initial condition $v$, boundary conditions $b_1,\ b_2(t)$, the potential constant $C_p$ and the exact drift $q$ are sufficiently smooth and satisfy the compatible condition. We give other assumptions as follows.  
 \begin{itemize}
 \item [(a)] $q\in C^1(0,1)$ with $\|q\|_{C^1(0,1)}<M$, where $M>0$ is a given sufficiently large constant.  
  \item [(b)] The potential constant $C_p$ is strictly larger than the upper bound $M$ in $(a)$. 
  \item [(c)] The left boundary constant $b_1$ is strictly positive. 
  \item [(d)] $b_2$ and $b'_2$ are strictly positive on $(0,T]$.   
  \item [(e)] $v\in C^3(0,1)$, $v'\ge 0$ on $(0,1)$, and $C_v:=\max\{\|v\|_{C^j(0,1)}:j=0,1,2,3\}<\infty.$
  \item [(f)] $f\ge (1+M+C_p)C_v$ and $f'\ge (1+2M+C_p)C_v$ on $(0,1)$.
 \end{itemize}
\end{assumption}

With Assumption \ref{assumption}, we could state the next positivity results and give the proofs. 

\begin{lemma}\label{lemma_positivity}
 Under Assumption \ref{assumption}, for model \eqref{PDE} we have the next results. 
\begin{itemize}
 \item [(a)] $u_x> 0$ on $(0,1)\times(0,T]$.
 \item [(b)] $\exists m>0$ such that $u_x(\cdot,T)>m$ on $(0,1)$. 
 \item [(c)] $u_t\ge 0$ on $[0,1]\times[0,T]$.
 \item [(d)] $u_{xt}\ge 0$ on $[0,1]\times[0,T]$.
\end{itemize}
\end{lemma}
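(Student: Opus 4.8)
The plan is to differentiate the parabolic equation in \eqref{PDE} once in $x$ and once in $t$, so that $w:=u_x$, $z:=u_t$ and $\zeta:=u_{xt}$ each solve a one–dimensional parabolic equation whose zeroth–order coefficient is nonnegative, and then to read the claimed signs from Lemma \ref{lemma_max_parabolic} and Assumption \ref{assumption}. Throughout I use that the smoothness and compatibility in Assumption \ref{assumption} make $u$ regular enough for $w,z,\zeta$ and their relevant derivatives to be continuous on $[0,1]\times[0,T]$.

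For (a), differentiating \eqref{PDE} in $x$ gives $w_t-w_{xx}+q\,w_x+(q'+C_p)w=f'$ on $(0,1)\times(0,T]$, with the Dirichlet data $w(0,t)=b_1$, $w(1,t)=b_2(t)$, $w(x,0)=v'(x)$. By Assumption \ref{assumption}(a)--(b) the coefficient $q'+C_p$ is positive, so $L:=-\partial_x^2+q\partial_x+(q'+C_p)$ has the form required in Lemma \ref{lemma_max_elliptic}; by (f), $f'\ge0$, hence $w_t+Lw\ge0$. If the minimum of $w$ over $[0,1]\times[0,T]$ were negative it could not be attained on the parabolic boundary, where $w$ equals $b_1>0$, $b_2(t)$ (positive for $t\in(0,T]$ by (d), and $v'(1)\ge0$ at $t=0$ by (e) and compatibility), or $v'\ge0$; it would therefore be attained at some $(x_0,t_0)\in(0,1)\times(0,T]$, and Lemma \ref{lemma_max_parabolic}(b) would make $w$ a negative constant on $(0,1)\times(0,t_0]$, contradicting $w(0,t_0)=b_1>0$. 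Thus $w\ge0$, and if $w(x_0,t_0)=0$ at an interior point, the same lemma (a nonpositive interior minimum) forces $w\equiv0$ on $(0,1)\times(0,t_0]$, once more contradicting $w(0,\cdot)=b_1$. Hence $u_x=w>0$ on $(0,1)\times(0,T]$.

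Part (b) follows at once: $u_x(\cdot,T)$ is continuous on $[0,1]$, positive on $(0,1)$ by (a), and equal to $b_1>0$, $b_2(T)>0$ at the endpoints by (c)--(d), so there is $m>0$ with $u_x(\cdot,T)>m$ on $[0,1]$. For (d), differentiating the $w$–equation in $t$ gives the homogeneous equation $\zeta_t-\zeta_{xx}+q\zeta_x+(q'+C_p)\zeta=0$ with the Dirichlet data $\zeta(0,t)=\partial_t b_1=0$, $\zeta(1,t)=b_2'(t)>0$ (by (d)) and the initial value $\zeta(x,0)=\partial_x u_t(x,0)=f'+v'''-q'v'-q\,v''-C_pv'$. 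Using $v'\ge0$, $|v''|,|v'''|\le C_v$, $v'\le C_v$ from (e) and $|q|,|q'|\le M$ from (a), this is $\ge f'-(1+2M+C_p)C_v\ge0$ by the second inequality in (f). Since $q'+C_p>0$ and the parabolic–boundary data of $\zeta$ are nonnegative, the minimum argument of part (a) (a negative interior minimum would make $\zeta$ a negative constant up to $t=0$, contradicting $\zeta(\cdot,0)\ge0$) gives $u_{xt}=\zeta\ge0$.

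Part (c) is the delicate one. Differentiating \eqref{PDE} in $t$ gives $z_t-z_{xx}+q z_x+C_pz=0$ with $C_p>0$, but now the lateral data are Neumann, $z_x(0,t)=\partial_t b_1=0$ and $z_x(1,t)=b_2'(t)>0$, while $z(x,0)=f+v''-q v'-C_pv\ge f-(1+M+C_p)C_v\ge0$ by (e) and the first inequality in (f). Since Lemma \ref{lemma_max_parabolic} controls only interior extrema, a minimum of $z$ on $\{x=0\}$ must be excluded directly. Assume $\mu:=\min_{[0,1]\times[0,T]}z<0$, attained at $(x_*,t_*)$: $t_*=0$ contradicts $z(\cdot,0)\ge0$; $x_*=1$ forces $z_x(1,t_*)\le0$, contradicting $b_2'(t_*)>0$; $x_*\in(0,1)$, $t_*\in(0,T]$ makes $z$ a negative constant on $(0,1)\times(0,t_*]$ by Lemma \ref{lemma_max_parabolic}(b), again contradicting $z(\cdot,0)\ge0$. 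So $x_*=0$, $t_*\in(0,T]$; then $t\mapsto z(0,t)$ has a minimum on $[0,T]$ at $t_*$, while $z_x(0,t_*)=0$ and $z_{xx}(0,t_*)\ge0$ because $x\mapsto z(x,t_*)$ has a minimum at $x=0$ with vanishing slope. Evaluating the equation at $(0,t_*)$ gives $z_t(0,t_*)=z_{xx}(0,t_*)-C_p\mu\ge-C_p\mu>0$, contradicting that $t_*$ minimizes $z(0,\cdot)$ (there $z_t(0,t_*)=0$ if $t_*<T$ and $z_t(0,t_*)\le0$ if $t_*=T$). Hence $u_t=z\ge0$. I expect the Neumann boundary in (c)---equivalently, establishing the sign on $\{x=0\}$ without appealing to a parabolic Hopf lemma---together with the routine but careful verification that $u_t(\cdot,0)\ge0$ and $u_{xt}(\cdot,0)\ge0$ follow from Assumption \ref{assumption}(f), to be the only genuine difficulties.
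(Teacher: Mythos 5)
Your proposal is correct and follows essentially the same route as the paper: differentiate \eqref{PDE} in $x$ and/or $t$, note that the zeroth-order coefficients $q'+C_p$ and $C_p$ are positive under Assumption \ref{assumption}, and apply Lemma \ref{lemma_max_parabolic}, handling the Neumann corner in (c) by evaluating the equation at a boundary minimum on $\{x=0\}$ exactly as the paper does. You merely supply a few details the paper leaves implicit (the sign checks of the initial data via Assumption \ref{assumption}(f) and the strict positivity step in (a)), which is fine.
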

\begin{proof}
 For $(a)$, with $w=u_x$ we have that  
 \begin{equation}\label{model_ux}
 \begin{cases}
  \begin{aligned}
   (\partial_t-\partial_x^2)w+qw_x+(q'+C_p)w&=f'\ge 0, &&(x,t)\in(0,1)\times(0,T];\\
   w(0,t)&=b_1>0,&&t\in(0,T];\\
   w(1,t)&=b_2(t)>0,&&t\in(0,T];\\
   w(x,0)&=v'\ge0,&&x\in(0,1).\\
  \end{aligned}
 \end{cases}
\end{equation}
From Assumption \ref{assumption} we have that the potential $q+C_p$ of the above system is positive. Then by Lemma \ref{lemma_max_parabolic}, it holds that $w=u_x> 0$ on $(0,1)\times(0,T]$.

For $(b)$, the conclusion $(a)$ and the boundary conditions of model \eqref{model_ux} give that $u_x(\cdot,T)>0$ on $[0,1]$. This and the continuity of $u_x(\cdot,T)$ (the continuity could be ensured by Lemma \ref{lemma_smooth}) gives the desired result.

For $(c)$, by setting $w=u_t$, we have 
 \begin{equation}\label{model_ut}
 \begin{cases}
  \begin{aligned}
   (\partial_t-\partial_x^2+q\partial_x+C_p)w&=0, &&(x,t)\in(0,1)\times(0,T];\\
   w_x(0,t)&=0,&&t\in(0,T];\\
   w_x(1,t)&=b'_2(t)>0,&&t\in(0,T];\\
   w(x,0)&=f+v''-qv'-C_pv\ge 0,&&x\in(0,1).\\
  \end{aligned}
 \end{cases}
\end{equation}
Assume that 
$$w(x_0,t_0):=\min\{w(x,t):(x,t)\in[0,1]\times[0,T]\}<0.$$
Lemma \ref{lemma_max_parabolic} gives that $(x_0,t_0)\notin (0,1)\times(0,T]$. 
From the initial condition of the above model, it gives that $t_0\ne 0$ and $x_0\in \{0,1\}$. For the case of $x_0=1$, with the boundary condition $w_x(1,t_0)> 0$,  there exists $x_1\in (0,1)$ such that $w(x_1,t_0)< w(1,t_0)$. This contradicts with the minimal assumption. For the case of $x_0=0$, the minimal assumption gives that $w_t(0,t_0)\le 0$; the boundary condition $w_x(0,t_0)=0$ with the minimal assumption yields that $u_xx(0,t_0)\ge 0$; the boundary condition $w_x(0,t_0)=0$ gives $q(0)w_x(0,t_0)=0$; the facts $C_p>0$ and $w(0,t_0)<0$ lead to $C_pw(0,t_0)<0$. Hence we have that 
$$(\partial_t-\partial_x^2+q\partial_x+C_p)w(0,t_0)<0,$$
which contradicts with model \eqref{model_ut}. 
So we have $w=u_t\ge 0$ on $[0,1]\times[0,T]$.

For $(d)$, the model of $w=u_{xt}$ could be written as 
\begin{equation*}
 \begin{cases}
  \begin{aligned}
   (\partial_t-\partial_x^2)w+qw_x+(q'+C_p)w&=0, &&(x,t)\in(0,1)\times(0,T];\\
   w(0,t)&=0,&&t\in(0,T];\\
   w(1,t)&=b'_2(t)>0,&&t\in(0,T];\\
   w(x,0)&=f'+v'''-qv''-(q'+C_p)v',&&x\in(0,1).\\
  \end{aligned}
 \end{cases}
\end{equation*}
With the fact that $q'+C_p>0$ and Lemma \ref{lemma_max_parabolic}, we could deduce that $w=u_{xt}\ge 0$ on $[0,1]\times[0,T]$. The proof is complete. 
\end{proof}

\subsection{Regularities.}

Here we give the lemmas concerning the regularity results, which would be used in the proof of the uniqueness for the iterative method. The next two lemmas contain the regularities about $\|u(\cdot,T)\|_{L^2(0,1)}$ and $\|\partial ^2_{xt}u\|_{L^\infty((0,1)\times(0,T))}$, respectively.  
\begin{lemma}\label{lemma_regularity_parabolic}
Under Assumption \ref{assumption}, for the following system
\begin{equation*}
 \begin{cases}
  \begin{aligned}
   (\partial_t-\partial_x^2)w(x,t)+ q(x)w_x(x,t)+C_pw(x,t)&=f_1(x,t), &&(x,t)\in(0,1)\times(0,T];\\
   w_x(0,t)&=0,&&t\in(0,T];\\
   w_x(1,t)&=0,&&t\in(0,T];\\
   w(x,0)&=f_2(x),&&x\in(0,1),\\
  \end{aligned}
 \end{cases}
\end{equation*}
one has
\begin{equation}
\begin{aligned}\nonumber
\|w(\cdot,T)\|_{L^2(0,1)}\leq C\|f_2\|_{L^2(0,1)}+ C\|f_1\|_{L^2((0,1)\times(0,T))}.   
\end{aligned}
\end{equation}
\end{lemma}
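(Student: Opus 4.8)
The plan is to run the standard $L^2$ energy method for this linear parabolic problem, the only real subtlety being the first‑order drift term $q w_x$, whose spatial derivative is of the same order as the dissipation we gain from the Laplacian. First I would multiply the equation by $w$ and integrate over $(0,1)$. The time term yields $\tfrac12\tfrac{d}{dt}\|w(\cdot,t)\|_{L^2(0,1)}^2$; integrating $-\int_0^1 w_{xx}w\,dx$ by parts and using the homogeneous Neumann conditions $w_x(0,t)=w_x(1,t)=0$ removes the boundary contributions and leaves $\|w_x(\cdot,t)\|_{L^2(0,1)}^2$; and the zeroth‑order term contributes $C_p\|w(\cdot,t)\|_{L^2(0,1)}^2\ge 0$, which may simply be discarded (or retained, since $C_p>M>0$ by Assumption \ref{assumption}).

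Next I would absorb the drift term. By Assumption \ref{assumption}(a) one has $\|q\|_{L^\infty(0,1)}\le M$, so Cauchy--Schwarz followed by Young's inequality gives
$$\left|\int_0^1 q\,w_x\,w\,dx\right|\le M\int_0^1 |w_x|\,|w|\,dx\le \tfrac12\|w_x\|_{L^2(0,1)}^2+\tfrac{M^2}{2}\|w\|_{L^2(0,1)}^2,$$
and the full dissipation term $\|w_x\|_{L^2(0,1)}^2$ absorbs the $\tfrac12\|w_x\|_{L^2(0,1)}^2$. Estimating the source term by $\int_0^1 f_1 w\,dx\le \tfrac12\|f_1(\cdot,t)\|_{L^2(0,1)}^2+\tfrac12\|w(\cdot,t)\|_{L^2(0,1)}^2$ and dropping the remaining nonnegative terms, I obtain the differential inequality
$$\frac{d}{dt}\|w(\cdot,t)\|_{L^2(0,1)}^2\le (M^2+1)\,\|w(\cdot,t)\|_{L^2(0,1)}^2+\|f_1(\cdot,t)\|_{L^2(0,1)}^2 .$$
Gronwall's inequality together with the initial value $\|w(\cdot,0)\|_{L^2(0,1)}^2=\|f_2\|_{L^2(0,1)}^2$ then yields
$$\|w(\cdot,T)\|_{L^2(0,1)}^2\le e^{(M^2+1)T}\Big(\|f_2\|_{L^2(0,1)}^2+\|f_1\|_{L^2((0,1)\times(0,T))}^2\Big),$$
and taking square roots with $\sqrt{a+b}\le\sqrt a+\sqrt b$ gives the claim with $C=e^{(M^2+1)T/2}$, which depends only on the fixed quantities $M$ and $T$.

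I do not anticipate a serious obstacle. The only points requiring a little care are that the energy identity above is a priori formal, so strictly speaking it should be derived on smooth Galerkin approximations and passed to the limit (or one simply invokes the standard well‑posedness and energy theory for parabolic equations, which applies under the smoothness hypotheses in Assumption \ref{assumption}); and one must make sure the drift term is absorbed by the \emph{entire} $\|w_x\|_{L^2(0,1)}^2$ coming from the Laplacian rather than overspending it. Everything else is routine.
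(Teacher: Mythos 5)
Your proposal is correct and follows essentially the same route as the paper: an energy estimate obtained by multiplying by $w$, integrating by parts with the Neumann conditions, absorbing the drift term $q w_x$ via Cauchy--Schwarz/Young into the dissipation from the Laplacian, and closing with Gronwall's inequality. The only differences are cosmetic choices of constants in Young's inequality and your extra (welcome) remarks on rigor, so no further comment is needed.
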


\begin{proof}
Setting 
$$E(t):=\frac{1}{2}\int_0^1w(x,t)^2dx,$$ 
we have 
\begin{align*}
\frac{dE}{dt}
&=\int_0^1w(x,t)w_t(x,t)dx\\
&=\int_0^1w(\partial_x^2w-q(x)w_x-C_pw+f_1)dx.
\end{align*} 
Also for 
$$\int_0^1w\partial_x^2wdx\ \text{and}\ \int_0^1wq(x)\partial_xwdx,$$ 
it holds that
\begin{align*}
\int_0^1w\partial_x^2wdx=(w\partial_xw)|_0^1-\int_0^1(\partial_xw)^2dx=-\int_0^1(\partial_xw)^2dx,
\end{align*} 
and
\begin{align*}
-\int_0^1wq(x)\partial_xwdx
&\leq \sqrt{\int_0^1w^2q(x)^2dx}\sqrt{\int_0^1(\partial_xw)^2dx}\\
&\leq \frac14\int_0^1w^2q(x)^2dx+\int_0^1(\partial_xw)^2dx,
\end{align*} 
respectively. Therefore, we could conclude that 
\begin{align*}
\frac{dE}{dt}
&\leq \frac14\int_0^1w^2q(x)^2dx-C_p\int_0^1w^2dx+\int_0^1wf_1dx\leq C\int_0^1w^2dx+\int_0^1wf_1dx,
\end{align*} 
which together with Gronwall inequality completes the proof.
\end{proof}

\begin{lemma}\label{lemma_regularity}
Under Assumption \ref{assumption}, for model \eqref{PDE} we have the next regularities for  $\partial^2_{xt}u$:
\begin{align*}
 \|\partial ^2_{xt}u\|_{L^\infty((0,1)\times(0,T))}&\le \max\bigg\{\|f'\|_{L^\infty(0,1)},\|b_2'\|_{L^\infty(0,T)}\bigg\}.
\end{align*}
\end{lemma}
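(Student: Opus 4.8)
The goal is to bound $\|\partial^2_{xt}u\|_{L^\infty}$ by the data on the parabolic boundary. The function $w=u_{xt}$ satisfies the PDE
$$
(\partial_t-\partial_x^2)w+q w_x+(q'+C_p)w=f',
$$
as already displayed in the proof of Lemma \ref{lemma_positivity}(d), with $w(0,t)=0$, $w(1,t)=b_2'(t)$ and $w(x,0)=f'+v'''-qv''-(q'+C_p)v'$. So the plan is to run a maximum-principle comparison argument for $w$.

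\medskip

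First I would look for a constant supersolution. Let $K=\max\{\|f'\|_{L^\infty(0,1)},\|b_2'\|_{L^\infty(0,T)}\}$ and consider $\bar w\equiv K$. Since $q'+C_p>0$ (Assumption \ref{assumption}(a)--(b)), the operator applied to the constant gives $(q'+C_p)K\ge K\ge f'$ pointwise, so $\bar w$ is a supersolution of the interior equation; on the lateral boundary $\bar w=K\ge b_2'(t)=w(1,t)$ and $\bar w=K\ge 0=w(0,t)$; and at $t=0$ we need $K\ge f'+v'''-qv''-(q'+C_p)v'$, which I expect to follow from Assumption \ref{assumption}(e)--(f) together with $v'\ge 0$ (the term $-(q'+C_p)v'$ is nonpositive, so this initial value is at most $f'+v'''-qv''$, and the compatibility conditions in Assumption \ref{assumption} should make the required bound hold; alternatively one may simply enlarge $K$ by $C_v$-dependent constants — but the statement as written suggests these cancellations are intended). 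Then $\bar w-w$ satisfies $u_t+Lu\ge 0$ with the operator $L$ of Lemma \ref{lemma_max_elliptic}, is nonnegative on the parabolic boundary, so by Lemma \ref{lemma_max_parabolic}(b) it cannot attain a negative interior minimum unless it is constant; either way $w\le K$ on $[0,1]\times[0,T]$. The symmetric argument with $\underline w\equiv -K$ and Lemma \ref{lemma_max_parabolic}(a) gives $w\ge -K$, hence $|w|\le K$ everywhere.

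\medskip

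The one delicate point is the lateral boundary $x=0$: there $w=u_{xt}$ is prescribed as $0$, not $b_2'$, so the bound $K\ge 0$ is trivial there, but one should double-check that the comparison function is admissible at the corner points $(0,0)$ and $(1,0)$, which is where the compatibility conditions in Assumption \ref{assumption} are used. The genuine obstacle I anticipate is verifying that the initial datum $f'+v'''-qv''-(q'+C_p)v'$ is dominated in absolute value by $K$; this is the only place where the precise interplay of the smallness/largeness hypotheses on $v$, $C_v$, $M$, $C_p$ matters, and if the clean bound in the statement is to hold exactly one must exploit the sign $v'\ge 0$ and presumably a compatibility relation of the form $v''(0)=v''(1)=0$ (forced by the Neumann data being constant in $x$ at $t=0$) to kill the boundary values of $v'''$-type terms. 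Once that is in hand, the maximum principle does the rest.
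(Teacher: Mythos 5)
Your strategy is the same as the paper's: apply the parabolic maximum principle to $w=u_{xt}$ and read the bound off the parabolic boundary data (the paper does exactly this, without even introducing a comparison function, by asserting that the maximum of $w$ is determined by $w(x,0)$, $w(0,t)$, $w(1,t)$). Two points in your write-up need attention. First, the equation you start from is wrong: differentiating \eqref{PDE} in $x$ and then in $t$ kills the source because $f=f(x)$ is time-independent, so $w=u_{xt}$ satisfies $(\partial_t-\partial_x^2)w+qw_x+(q'+C_p)w=0$; this is what the display in the proof of Lemma \ref{lemma_positivity}(d) actually says --- the source $f'$ belongs to the equation for $u_x$, not for $u_{xt}$. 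As written, your supersolution check leans on $(q'+C_p)K\ge K$, which would require $q'+C_p\ge 1$; Assumption \ref{assumption} only gives $q'+C_p>C_p-M>0$, and nothing prevents $C_p-M$ from being small, so that step is not justified. With the correct homogeneous equation the constant $K\ge 0$ is trivially a supersolution, since $(q'+C_p)K\ge 0$, and this difficulty disappears; moreover the lower bound $w\ge 0\ge -K$ is already supplied by Lemma \ref{lemma_positivity}(d), so the symmetric barrier is unnecessary.

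Second, the ``genuine obstacle'' you flag --- bounding the initial value $w(x,0)=f'+v'''-qv''-(q'+C_p)v'$ by $\max\{\|f'\|_{L^\infty(0,1)},\|b_2'\|_{L^\infty(0,T)}\}$ --- is indeed left unresolved in your proposal, and Assumption \ref{assumption} does not obviously force it: item (f) yields the nonnegativity of this quantity, not an upper bound by $\|f'\|_{L^\infty}$, and the compatibility conditions act only at the corner points, not on all of $(0,1)$. Be aware that the paper's own proof does not resolve this either: it simply writes $w(x,0)=f'$, discarding the $v$-dependent terms, which is consistent with its earlier display only when those terms vanish. So your proposal is incomplete precisely where the paper's argument is also loose; a safe repair is to replace $\|f'\|_{L^\infty(0,1)}$ in the stated bound by $\|f'+v'''-qv''-(q'+C_p)v'\|_{L^\infty(0,1)}$ (or to enlarge the constant by a quantity depending on $C_v$, $M$, $C_p$), which is all that is needed where the lemma is invoked in the proof of Theorem \ref{theorem_unique}.
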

\begin{proof}
From the proof of Lemma \ref{lemma_positivity}, the model for $w=u_{xt}$ could be given as 
\begin{equation*}
 \begin{cases}
  \begin{aligned}
   (\partial_t-\partial_x^2)w+qw_x+(q'+C_p)w&=0, &&(x,t)\in(0,1)\times(0,T];\\
   w(1,t)&=0,&&t\in(0,T];\\
   w(1,t)&=b'_2(t)>0,&&t\in(0,T];\\
   w(x,0)&=f'\ge 0,&&x\in(0,1).\\
  \end{aligned}
 \end{cases}
\end{equation*}
We assume that 
$$w(x_0,t_0)=\max_{[0,1]\times[0,T]}\{w(x,t)\}.$$
From Lemma \ref{lemma_max_parabolic}, we know that the maximum value is determined by the initial and boundary conditions $w(x,0)$, $w(0,t)$ and $w(1,t)$. The proof is complete.
\end{proof}

The following lemma ensures the smoothness of solution $u$ under the weaker assumption that the drift $q$ only belongs to $L^\infty(0,1)$.  We introduce the so-called $t$-anisotropic Sobolev space 
$$\|u\|_{W_p^{2k,k}((0,1)\times(0,T))}:=\bigg(\int\int_{(0,1)\times(0,T)}\sum_{|\alpha|+2r\leq2k}|D^\alpha D_t^ru|^pdxdt\bigg)^{1/p},$$ 
where $k$ is a nonnegative integer and $1\le p \le +\infty$. Following the idea in \cite[subsection 9.2.2]{Wu:2006}, we have Lemma \ref{lemma_smooth}.
\begin{lemma}\label{lemma_smooth}
In Assumption \ref{assumption}, we use the weaker condition that $q\in{L^\infty}(0,1)$. Then for model
\eqref{PDE}, 
one has
\begin{equation}
\begin{aligned}\nonumber
\|u\|_{W_p^{2,1}((0,1)\times(0,T))}\leq C,~~p>1.   
\end{aligned}
\end{equation}
\end{lemma}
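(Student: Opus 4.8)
The plan is to establish the $W_p^{2,1}$ bound by a standard bootstrapping argument built on linear parabolic $L^p$ theory, treating the first-order term $q(x)u_x$ as a lower-order perturbation to be absorbed. First I would recall the classical interior-plus-boundary $L^p$ estimate for the heat operator with Neumann-type data: for the problem $(\partial_t - \partial_x^2)u = F$ in $(0,1)\times(0,T)$ with the prescribed Neumann boundary values and initial datum $v$, one has
\begin{equation*}
\|u\|_{W_p^{2,1}((0,1)\times(0,T))} \le C\big(\|F\|_{L^p((0,1)\times(0,T))} + \|v\|_{W_p^{2-2/p}(0,1)} + \text{(boundary norms)}\big),
\end{equation*}
which is the content cited from \cite[subsection 9.2.2]{Wu:2006}. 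Here $F = f(x) - q(x)u_x - C_p u$, and the point is that $f$, $C_p$, $v$, $b_1$, $b_2$ are all fixed smooth data, so their contributions are bounded by a constant depending only on the data.

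Next I would handle the dependence of $F$ on $u$ itself. The term $C_p u$ is harmless: $\|C_p u\|_{L^p} \le C_p\|u\|_{L^p}$, and $\|u\|_{L^p((0,1)\times(0,T))}$ is controlled by an a priori energy estimate (of the type already used in the proof of Lemma \ref{lemma_regularity_parabolic}, or simply by the maximum principle bounds from Lemma \ref{lemma_positivity} together with the structure of the equation), uniformly in $q$ since $q\in L^\infty$ with $\|q\|_{L^\infty}\le M$. The genuinely delicate term is $q(x)u_x$: since $q$ is only $L^\infty$, we can bound $\|q u_x\|_{L^p} \le M\|u_x\|_{L^p}$, and $u_x$ must be controlled by an \emph{interpolation inequality} between $\|u\|_{L^p}$ and $\|u\|_{W_p^{2,1}}$. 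Concretely, for any $\varepsilon>0$ there is $C_\varepsilon$ with
\begin{equation*}
\|u_x\|_{L^p((0,1)\times(0,T))} \le \varepsilon \|u\|_{W_p^{2,1}((0,1)\times(0,T))} + C_\varepsilon \|u\|_{L^p((0,1)\times(0,T))}.
\end{equation*}
Plugging this in, choosing $\varepsilon$ small enough that $CM\varepsilon \le \tfrac12$, and absorbing the resulting $\tfrac12\|u\|_{W_p^{2,1}}$ term into the left-hand side, yields the desired bound $\|u\|_{W_p^{2,1}} \le C$ with $C$ depending only on $p$, $T$, $M$, $C_p$ and the fixed data — in particular not on finer regularity of $q$.

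The main obstacle I anticipate is making the absorption argument genuinely rigorous rather than circular: the interpolation inequality presupposes $u \in W_p^{2,1}$, which is a priori only known once $q$ is smooth, so strictly speaking one should first apply the argument to smooth approximations $q_n \to q$ in the weak-$*$ sense of $L^\infty$ (with uniformly bounded norms), obtain the uniform estimate $\|u_n\|_{W_p^{2,1}}\le C$, and then pass to the limit using weak compactness in $W_p^{2,1}$ and uniqueness of the limiting problem to identify the limit with $u$. A secondary technical point is the correct handling of the Neumann boundary conditions in the $L^p$ parabolic theory — one needs the version of the Solonnikov-type estimate that accommodates nonhomogeneous Neumann data $b_1$, $b_2(t)$ with the appropriate compatibility and trace-norm requirements — but since Assumption \ref{assumption} already imposes smoothness and compatibility of all the data, this reduces to quoting \cite{Wu:2006} rather than reproving it.
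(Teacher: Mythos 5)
Your proposal is correct and takes essentially the same route as the paper: the paper offers no written proof of this lemma, simply invoking the $L^p$ parabolic (Solonnikov-type) theory of \cite[subsection 9.2.2]{Wu:2006}, which is precisely the estimate-plus-absorption scheme you describe, with the first-order term $qu_x$ treated as an $L^\infty$-bounded perturbation. Your added care with the interpolation/absorption step and with smooth approximation of $q\in L^\infty(0,1)$ (uniform bounds, weak compactness, identification of the limit) is exactly how that citation is made rigorous, so I see no gap.
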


\section{Uniqueness of inverse problem \eqref{inverse_problem}.}\label{section_unique}
In this section we would use the monotone operator method to solve inverse problem \eqref{inverse_problem}.

\subsection{Operator $K$ and the equivalence.}
From equation \eqref{PDE} and data \eqref{data}, we give the definition of the operator $K$ as follows:  
\begin{equation}\label{operator}
 K\psi=\frac{f(x)-\partial_tu(x,T;\psi)+g''(x)-C_pg(x)}{g'(x)},\ x\in[0,1],
\end{equation}
with domain 
\begin{equation}\label{domain}
 \mathcal D=\{\psi\in C^1([0,1]):\psi\le [f(x)+g''(x)-C_pg(x)]/g'(x)\}.
\end{equation}
Here the notation $u(x,t;\psi)$ means the solution $u$ of equation \eqref{PDE} with drift term $\psi$. Lemma \ref{lemma_positivity} ensures the strict positivity of the dominator $g'(x)$, which leads to the well-definedness of operator $K$. 

With the operator $K$, next we need to prove the equivalence between the fixed point of $K$ and the solution of inverse problem \eqref{inverse_problem}. 
\begin{lemma}\label{lemma_equivalence}
 Fixing $q$ in the domain $\mathcal D$ defined in \eqref{domain}, the next two statements are equivalent.
\begin{itemize}
\item[(a)] $q$ is the a fixed point of $K$; 
\item[(b)] $u(x,T;q)=g(x)$, where $u(\cdot,\cdot;q)$ is the solution of \eqref{PDE} with drift $q$ and $g(x)$ is given by data \eqref{data}.
\end{itemize}

\end{lemma}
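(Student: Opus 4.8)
The plan is to show the two implications separately, both of which should follow by straightforward algebraic manipulation of the PDE \eqref{PDE} evaluated at the final time $t=T$, together with the data relation \eqref{data}. The key observation is that the definition of $K$ in \eqref{operator} is obtained precisely by solving the equation $(\partial_t-\partial_x^2+\psi\partial_x+C_p)u=f$ at $t=T$ for the coefficient of $\partial_x u(x,T)$, so the equivalence is essentially built into the construction; the lemma is really a consistency check that no information is lost.

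First I would prove $(b)\Rightarrow(a)$. Suppose $u(x,T;q)=g(x)$. Then differentiating in $x$ gives $u_x(x,T;q)=g'(x)$ and $u_{xx}(x,T;q)=g''(x)$; note $g'(x)>0$ on $[0,1]$ by Lemma \ref{lemma_positivity}(b), so the quotient in \eqref{operator} makes sense. Now evaluate the PDE \eqref{PDE} at $t=T$:
\begin{equation}\nonumber
\partial_t u(x,T;q)-u_{xx}(x,T;q)+q(x)u_x(x,T;q)+C_p u(x,T;q)=f(x).
\end{equation}
Substituting $u_{xx}(x,T;q)=g''(x)$, $u_x(x,T;q)=g'(x)$, $u(x,T;q)=g(x)$ and solving for $q(x)$ yields
\begin{equation}\nonumber
q(x)=\frac{f(x)-\partial_t u(x,T;q)+g''(x)-C_p g(x)}{g'(x)}=Kq(x),
\end{equation}
so $q$ is a fixed point of $K$. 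One should also check that $q\in\mathcal D$, which holds because $q\in C^1([0,1])$ by Assumption \ref{assumption}(a) and $\partial_t u(x,T;q)\ge 0$ by Lemma \ref{lemma_positivity}(c), forcing $Kq(x)\le[f(x)+g''(x)-C_p g(x)]/g'(x)$; since $q=Kq$, the constraint in \eqref{domain} is satisfied.

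For the converse $(a)\Rightarrow(b)$, assume $Kq=q$, i.e.
\begin{equation}\nonumber
q(x)g'(x)=f(x)-\partial_t u(x,T;q)+g''(x)-C_p g(x).
\end{equation}
Let $U(x):=u(x,T;q)$ be the final-time trace of the solution with drift $q$. Evaluating \eqref{PDE} at $t=T$ and rearranging gives $q(x)U'(x)=f(x)-\partial_t u(x,T;q)+U''(x)-C_p U(x)$. Subtracting the fixed-point relation, the difference $\phi:=U-g$ satisfies a second-order linear ODE of the form $-\phi''+q\phi'+C_p\phi=0$ on $(0,1)$ — here I would need to argue the boundary data for $\phi$ vanish: from \eqref{PDE} we have $u_x(0,t)=b_1$ and $u_x(1,t)=b_2(t)$, so $U'(0)=b_1$ and $U'(1)=b_2(T)$; the measured data $g$ is, by hypothesis in the inverse problem, the trace of \emph{some} such solution, hence $g'(0)=b_1$ and $g'(1)=b_2(T)$ as well, giving $\phi'(0)=\phi'(1)=0$. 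Then the elliptic maximum principle, Lemma \ref{lemma_max_elliptic}, applied to $\phi$ (whose operator $-\partial_x^2+q\partial_x+C_p$ has $C_p>0$): if $\phi$ attains a positive interior maximum or negative interior minimum it is constant, and a nonzero constant is killed by the $C_p\phi=0$ relation; the Neumann boundary conditions rule out the maximum/minimum being attained only at the endpoints unless $\phi\equiv 0$. Hence $\phi\equiv 0$, i.e. $u(x,T;q)=g(x)$.

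The main obstacle is the boundary-condition bookkeeping in $(a)\Rightarrow(b)$: one must be careful that $g$ is genuinely the final-time data of \eqref{PDE} for some admissible drift (this is the standing premise of the inverse problem, so $g'(0)=b_1$, $g'(1)=b_2(T)$ are legitimate), and then invoke Lemma \ref{lemma_max_elliptic} correctly to conclude $\phi\equiv0$ from the homogeneous Neumann problem with $C_p>0$. The direction $(b)\Rightarrow(a)$ is purely computational.
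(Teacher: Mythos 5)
Your proposal is correct and follows essentially the same route as the paper: the direction $(b)\Rightarrow(a)$ is the direct substitution the paper calls obvious, and for $(a)\Rightarrow(b)$ you form $w=u(\cdot,T;q)-g$, derive the homogeneous elliptic equation $-w''+qw'+C_pw=0$ with Neumann data $w'(0)=w'(1)=0$ (using, as you rightly note, that $g$ inherits $g'(0)=b_1$, $g'(1)=b_2(T)$ from being a final-time trace), and conclude $w\equiv 0$ via the maximum principle with $C_p>0$. The only place you are terser than the paper is at the endpoints, where the paper makes the boundary contradiction explicit by observing that a positive maximum at $x_0\in\{0,1\}$ together with $w'(x_0)=0$ forces $w''(x_0)\le 0$ and hence $-w''(x_0)+q(x_0)w'(x_0)+C_pw(x_0)>0$; this is exactly the argument your sentence gestures at.
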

\begin{proof}
 Statement $(a)$ is obvious from equation \eqref{PDE} and data \eqref{data}. 
 
 For statement $(b)$, from the formulations of equation \eqref{PDE} and operator \eqref{operator}, $w(x):=u(x,T;q)-g(x)$ satisfies 
 \begin{equation*}
 \begin{cases}
  \begin{aligned}
   -w''(x)+q(x)w'(x)+C_pw(x)&=0,\quad x\in(0,1);\\
   w'(0)&=0;\\
   w'(1)&=0.
  \end{aligned}
 \end{cases}
\end{equation*}
Assume that 
$$w(x_0)=\max_{[0,1]}\{w(x)\}>0.$$
Lemma \ref{lemma_max_elliptic} gives that $x_0\in\{0,1\}$. For the case of $x_0=1$, the maximal property and the boundary condition $w'(1)=0$ ensure that $w''(1)\le 0$, which leads to 
$$-w''(1)+q(1)w'(1)+C_pw(1)>0.$$
This contradicts with the model for $w$. The contradiction for $x_0=0$ could be deduced analogously. So we have $w\le 0$ on $[0,1]$. 

Similarly, if we assume 
$$w(x_0)=\min_{[0,1]}\{w(x)\}<0,$$ 
the contradiction could be generated following the above arguments, which leads to $w\ge 0$ on $[0,1]$. Hence we have $w=u(x,T;q)-g(x)\equiv 0$ on $[0,1]$ and complete the proof. 
\end{proof}

\subsection{Monotonicity.}
In this subsection, we would prove that the operator $K$ in \eqref{operator} is a monotone operator, which is contained by the lemma below. 
\begin{lemma}\label{lemma_monotone}
 With $K$ in \eqref{operator} and $\mathcal D$ in \eqref{domain}, given $q_1, q_2\in \mathcal D$, the result $q_1\le q_2$  implies that $Kq_1\le Kq_2$ in $[0,1]$.
\end{lemma}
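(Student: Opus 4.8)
The plan is to reduce the claimed inequality to a one-sided bound on the time derivative, at $t=T$, of the difference of the two PDE solutions, and then to obtain that bound by differentiating the difference equation in $t$ and applying the parabolic maximum principle (Lemma \ref{lemma_max_parabolic}) together with the positivity results of Lemma \ref{lemma_positivity}. Concretely, write $u_i:=u(\cdot,\cdot;q_i)$ for $i=1,2$ and set $z:=u_1-u_2$. Since the terms $f$, $g''$ and $-C_pg$ in the numerator of \eqref{operator} do not involve the drift, they cancel in $Kq_2-Kq_1$, and so
\begin{equation*}
 Kq_2(x)-Kq_1(x)=\frac{\partial_t u_1(x,T)-\partial_t u_2(x,T)}{g'(x)}=\frac{\partial_t z(x,T)}{g'(x)},\qquad x\in[0,1].
\end{equation*}
By Lemma \ref{lemma_positivity}(b) the denominator $g'=u_x(\cdot,T;q_2)$ is strictly positive on $[0,1]$, so it suffices to prove $\partial_t z(\cdot,T)\ge 0$.

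Next I would derive the governing equations. Subtracting the two instances of \eqref{PDE} and writing $q_1\partial_x u_1-q_2\partial_x u_2=q_1\partial_x z+(q_1-q_2)\partial_x u_2$ gives
\begin{equation*}
 (\partial_t-\partial_x^2)z+q_1\partial_x z+C_p z=(q_2-q_1)\,\partial_x u_2\ \text{ in }(0,1)\times(0,T],\qquad z_x(0,t)=z_x(1,t)=0,\quad z(\cdot,0)\equiv 0 .
\end{equation*}
Putting $w:=\partial_t z$ and differentiating the $z$-equation in $t$ yields
\begin{equation*}
 (\partial_t-\partial_x^2)w+q_1\partial_x w+C_p w=(q_2-q_1)\,\partial_{xt}u_2\ \text{ in }(0,1)\times(0,T],\qquad w_x(0,t)=w_x(1,t)=0 ,
\end{equation*}
whose right-hand side is nonnegative because $q_2-q_1\ge 0$ and $\partial_{xt}u_2\ge 0$ by Lemma \ref{lemma_positivity}(d). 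For the initial value, $z(\cdot,0)\equiv 0$ forces $\partial_x z(\cdot,0)=\partial_x^2 z(\cdot,0)=0$, so evaluating the $z$-equation at $t=0$ gives $w(\cdot,0)=(q_2-q_1)\,v'\ge 0$ by Assumption \ref{assumption}(e). All the differentiations above are legitimate by the regularity of $u_1,u_2$ (cf. Lemma \ref{lemma_smooth}).

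It then remains to show $w\ge 0$ on $[0,1]\times[0,T]$; taking $t=T$ gives $\partial_t z(\cdot,T)=w(\cdot,T)\ge 0$ and finishes the proof. Suppose not, and let $m:=\min_{[0,1]\times[0,T]}w<0$ be attained at $(x_0,t_0)$. The case $t_0=0$ is excluded since $w(\cdot,0)\ge 0>m$. If $(x_0,t_0)\in(0,1)\times(0,T]$, Lemma \ref{lemma_max_parabolic}(b) forces $w\equiv m$ on $(0,1)\times(0,t_0]$, which by continuity contradicts $w(\cdot,0)\ge 0$. Hence $x_0\in\{0,1\}$ and $t_0>0$. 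There minimality gives $w_t(x_0,t_0)\le 0$, the Neumann condition gives $w_x(x_0,t_0)=0$, and a second-order Taylor expansion in $x$ at the endpoint gives $w_{xx}(x_0,t_0)\ge 0$; together with $q_1(x_0)w_x(x_0,t_0)=0$ and $C_p w(x_0,t_0)=C_p m<0$ (recall $C_p>0$) this yields $(\partial_t-\partial_x^2+q_1\partial_x+C_p)w(x_0,t_0)<0$, contradicting the nonnegativity of the right-hand side of the $w$-equation.

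The main obstacle is precisely this lateral-boundary case $x_0\in\{0,1\}$: because $w$ now carries \emph{homogeneous} Neumann data, the strict-inequality trick used for $x_0=1$ in the proof of Lemma \ref{lemma_positivity}(c) is unavailable, and one must instead extract the contradiction from the second-order behaviour of $w$ at the endpoint — a Hopf-lemma-type observation — combined with the strictly positive zeroth-order coefficient $C_p$. Everything else (the cancellation defining $Kq_2-Kq_1$, the derivation of the $z$- and $w$-equations, and the sign bookkeeping) is routine given Lemma \ref{lemma_positivity} and Assumption \ref{assumption}.
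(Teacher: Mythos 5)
Your proposal is correct and takes essentially the same route as the paper: the paper likewise sets $w=\partial_t u(\cdot,\cdot;q_1)-\partial_t u(\cdot,\cdot;q_2)$, notes the source $(q_2-q_1)\partial_{xt}^2u(\cdot,\cdot;q_2)\ge 0$ and initial value $(q_2-q_1)v'\ge 0$ with homogeneous Neumann data, and rules out a negative minimum at an interior point by Lemma \ref{lemma_max_parabolic} and at a lateral endpoint by the same sign bookkeeping ($w_t\le 0$, $w_x=0$, $w_{xx}\ge 0$, $C_pw<0$) that you use. One cosmetic slip: the denominator is $g'$, the derivative of the fixed measured data $g=u(\cdot,T;q)$ for the exact drift $q$, not $u_x(\cdot,T;q_2)$, but its strict positivity is exactly what Lemma \ref{lemma_positivity} supplies, so the argument is unaffected.
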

\begin{proof}
 Setting $w=\partial_tu(x,t;q_1)-\partial_tu(x,t;q_2)$, from \eqref{PDE}, Assumption \ref{assumption} and Lemma \ref{lemma_positivity} we have that 
 \begin{equation*}
 \begin{cases}
  \begin{aligned}
   (\partial_t-\partial_x^2+q_1\partial_x+C_p)w(x,t)
   &=(q_2-q_1)\partial_{xt}^2 u(x,t;q_2)\ge0, &&(x,t)\in(0,1)\times(0,T];\\
   w_x(0,t)&=0,&&t\in(0,T];\\
   w_x(1,t)&=0,&&t\in(0,T];\\
   w(x,0)&= (q_2-q_1)v'\ge 0,&&x\in(0,1),\\
  \end{aligned}
 \end{cases}
\end{equation*}
We assume that 
$$w(x_0,t_0)=\min_{[0,1]\times[0,T]}\{ w(x,t)\}<0.$$
The initial condition with Lemma \ref{lemma_max_parabolic} yields that $(x_0,t_0)\in \{0,1\}\times(0,T]$. For the case of $x_0=1$, the minimal property gives that 
$\partial_tw(x_0,t_0)\le 0$ and $w_{xx}(x_0,t_0)\ge 0$, which together with $w_x(x_0,t_0)=0$ and $w(x_0,t_0)<0$ leads to 
$$(\partial_t-\partial_x^2+q_1\partial_x+C_p)w(x_0,t_0)<0.$$ 
This contradicts with the nonnegativity of the source $\partial_{xt}^2 u(x,t;q_2)(q_2-q_1)$. The proof for the case $x_0$ could be given similarly. Hence we have $w\ge 0$ on $[0,1]\times[0,T]$, which gives $Kq_1\le Kq_2$ and completes the proof.  
\end{proof}

\subsection{The iteration from upper bound and the uniqueness theorem.}

In this subsection we would complete the proof of the uniqueness theorem. Firstly, we prove the uniqueness result under monotone relation, which is contained in the next lemma. \begin{lemma}\label{lemma_uniqueness}
With $K$ in \eqref{operator} and $\mathcal D$ in \eqref{domain}, if $q_1,q_2$ are both the fixed points of $K$ in $\mathcal D$ and satisfy $q_1\le q_2$, then $q_1=q_2$.
\end{lemma}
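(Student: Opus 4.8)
The plan is to combine the equivalence of Lemma~\ref{lemma_equivalence} with the strong maximum principle, essentially as in the proof of Lemma~\ref{lemma_monotone}. Since $q_1,q_2$ are fixed points of $K$, Lemma~\ref{lemma_equivalence} gives $u(x,T;q_1)=g(x)=u(x,T;q_2)$. Put $w(x,t):=u(x,t;q_1)-u(x,t;q_2)$; subtracting the two instances of \eqref{PDE} shows that $w$ solves
\begin{equation*}
 \begin{cases}
  \begin{aligned}
   (\partial_t-\partial_x^2+q_1\partial_x+C_p)w&=(q_2-q_1)\,\partial_x u(x,t;q_2), &&(x,t)\in(0,1)\times(0,T];\\
   w_x(0,t)&=0,&&t\in(0,T];\\
   w_x(1,t)&=0,&&t\in(0,T];\\
   w(x,0)&=0,&&x\in(0,1),
  \end{aligned}
 \end{cases}
\end{equation*}
together with the extra relation $w(\cdot,T)\equiv 0$ on $[0,1]$. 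By $q_1\le q_2$ and Lemma~\ref{lemma_positivity}(a), the right-hand side is nonnegative on $(0,1)\times(0,T]$, so $w_t+Lw\ge 0$ with $L=-\partial_x^2+q_1\partial_x+C_p$ (the operator of Lemma~\ref{lemma_max_elliptic} for $a=q_1$, $c=C_p\ge 0$); the regularity of $w$ needed below is provided by Lemma~\ref{lemma_smooth}.

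First I would establish $w\ge 0$ on $[0,1]\times[0,T]$. If $\min w<0$, then since $w(\cdot,0)\equiv 0$ the minimum is not attained on $\{t=0\}$. If it is attained at an interior point $(x_0,t_0)\in(0,1)\times(0,T]$, Lemma~\ref{lemma_max_parabolic}(b) forces $w$ to equal that negative constant on $(0,1)\times(0,t_0]$, contradicting $w(\cdot,0)\equiv 0$ by continuity (equivalently, substituting the constant into the equation gives $w_t+Lw=C_pw<0$, which contradicts the nonnegative source). If it is attained at $x_0\in\{0,1\}$ with $t_0\in(0,T]$, then $w_x(x_0,t_0)=0$, $w_{xx}(x_0,t_0)\ge 0$, $\partial_t w(x_0,t_0)\le 0$ and $w(x_0,t_0)<0$ give $(\partial_t-\partial_x^2+q_1\partial_x+C_p)w(x_0,t_0)<0$, again contradicting nonnegativity of $(q_2-q_1)\partial_x u(\cdot,\cdot;q_2)$. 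Hence $w\ge 0$.

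Next, since $w\ge 0$ and $w(\cdot,T)\equiv 0$, the minimum value of $w$ over $[0,1]\times[0,T]$ is $0$ and is attained at the interior point $(\tfrac12,T)\in(0,1)\times(0,T]$. Lemma~\ref{lemma_max_parabolic}(b) then forces $w$ to be constant, hence identically $0$, on $(0,1)\times(0,T]$, and therefore on the whole closed cylinder by continuity. Substituting $w\equiv 0$ into its equation yields $(q_2-q_1)\,\partial_x u(x,t;q_2)\equiv 0$ on $(0,1)\times(0,T]$; since $\partial_x u(\cdot,\cdot;q_2)>0$ there by Lemma~\ref{lemma_positivity}(a), we conclude $q_1\equiv q_2$ on $(0,1)$, and then on $[0,1]$ since $q_1,q_2\in C^1([0,1])$.

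The only genuine obstacle is the first step: because we have only the one-sided inequality $w_t+Lw\ge 0$, the maximum principle controls the minimum but not the maximum of $w$, so $w\ge 0$ has to be extracted through the interior/boundary case analysis above rather than by a direct comparison, with the homogeneous Neumann data at $x=0,1$ handled by the local Taylor-expansion (Hopf-type) argument already used in Lemma~\ref{lemma_monotone}. After that, using $w(\cdot,T)\equiv 0$ to upgrade $w\ge 0$ to $w\equiv 0$, and turning $w\equiv 0$ into $q_1=q_2$ via the strict positivity of $u_x(\cdot,\cdot;q_2)$, are routine.
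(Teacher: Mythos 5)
Your argument is correct, and its skeleton---the same difference $w=u(\cdot,\cdot;q_1)-u(\cdot,\cdot;q_2)$, the same initial--boundary value problem with nonnegative source $(q_2-q_1)\partial_x u(\cdot,\cdot;q_2)$, the identity $w(\cdot,T)\equiv 0$ from Lemma~\ref{lemma_equivalence}, and the final passage from $w\equiv 0$ to $q_1=q_2$ via the strict positivity of $u_x$---matches the paper's. Where you genuinely diverge is the step forcing $w\equiv 0$. The paper never applies a maximum principle to $w$ itself: it quotes the proof of Lemma~\ref{lemma_monotone}, which for $q_1\le q_2$ already yields $\partial_t w=\partial_t u(\cdot,\cdot;q_1)-\partial_t u(\cdot,\cdot;q_2)\ge 0$ on the whole cylinder, so $t\mapsto w(x,t)$ is nondecreasing; combined with $w(x,0)=w(x,T)=0$ this pinches $w\equiv 0$ in one line. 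You instead prove $w\ge 0$ by a weak minimum-principle case analysis (interior points via Lemma~\ref{lemma_max_parabolic}(b), boundary points via the Neumann/Taylor argument), and then upgrade to $w\equiv 0$ by the strong (constancy) form of Lemma~\ref{lemma_max_parabolic}(b) at the interior terminal point where the nonnegative $w$ vanishes. Both routes are valid: the paper's buys brevity by reusing the monotonicity proof (whose source is signed by $\partial_{xt}^2u\ge 0$, Lemma~\ref{lemma_positivity}(d)), while yours is self-contained at the level of $w$, signs the source with $u_x>0$ (Lemma~\ref{lemma_positivity}(a)), but pays by redoing the boundary-minimum analysis and leaning on the strong minimum principle at $t=T$.
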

\begin{proof}
 With $w=u(x,t;q_1)-u(x,t;q_2)$, we have 
 \begin{equation*}
 \begin{cases}
  \begin{aligned}
   (\partial_t-\partial_x^2+q_1\partial_x+C_p)w(x,t)&=(q_2-q_1)u_x(x,t;q_2), &&(x,t)\in(0,1)\times(0,T];\\
   w_x(0,t)&=0,&&t\in(0,T];\\
   w_x(1,t)&=0,&&t\in(0,T];\\
   w(x,0)&=w(x,T)=0,&&x\in(0,1).\\
  \end{aligned}
 \end{cases}
\end{equation*}
The equality $w(x,T)=0$ comes from Lemma \ref{lemma_equivalence}. The proof of Lemma \ref{lemma_monotone} ensures the nonnegativity of $\partial_t w$. This result together with $w(x,0)=w(x,T)=0$ yields that $w\equiv 0$, which leads to $u_x(x,t;q_2)(q_2-q_1)\equiv 0$. Then by the positivity result $u_x(\cdot, T;q_2)>0$, which is ensured by Lemma \ref{lemma_positivity}, we have $q_1=q_2$ and complete the proof. 
\end{proof}

Next we define the iteration of $K$ from upper bound of $\mathcal D$ as follows. 
\begin{equation}\label{iteration}
 q_0=[f(x)+g''(x)-C_pg(x)]/g'(x),\quad q_{n+1}=Kq_n,\quad n=0,1,\cdots.
\end{equation}
The above iteration would be used in the proof of uniqueness. 

Now we could complete the proof of the main result--Theorem \ref{theorem_unique}. 
\begin{theorem}\label{theorem_unique}
 With Assumption \ref{assumption}, $K$ in \eqref{operator} and $\mathcal D$ in \eqref{domain}, if $q\in \mathcal D$ is a fixed point of $K$, then the sequence $\{q_n\}_{n=0}^\infty$ generated by iteration \eqref{iteration} would converge decreasingly to it. Moreover, this ensures the uniqueness of inverse problem \eqref{inverse_problem} in $\mathcal D$. 
\end{theorem}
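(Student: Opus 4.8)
The plan is to establish that the iteration \eqref{iteration} is well-defined, monotone decreasing, bounded below by any fixed point $q$, and hence convergent, with the limit itself a fixed point; uniqueness then follows from Lemma \ref{lemma_uniqueness}. First I would check the base case: by definition $q_0 = [f+g''-C_pg]/g'$ is exactly the upper endpoint of $\mathcal D$, so $q_0\in\mathcal D$, and any fixed point $q$ satisfies $q\le q_0$ since $q\in\mathcal D$. Applying Lemma \ref{lemma_monotone} to the pair $q\le q_0$ gives $q = Kq \le Kq_0 = q_1$; applying it to $q_1 \le q_0$ (which needs $q_1\in\mathcal D$) gives $Kq_1\le Kq_0$, i.e. $q_2\le q_1$. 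So inductively I would prove the statement $P(n)$: ``$q_n\in\mathcal D$, $q_{n+1}\le q_n$, and $q\le q_{n+1}$.'' The membership $q_{n+1}=Kq_n\in\mathcal D$ requires two things — that $Kq_n$ is $C^1$ (which follows from the regularity of $u(\cdot,T;q_n)$ and $\partial_t u(\cdot,T;q_n)$ via Lemmas \ref{lemma_smooth}, \ref{lemma_regularity}, together with the smoothness of $f,g$ and strict positivity of $g'$ from Lemma \ref{lemma_positivity}), and that $Kq_n\le q_0$, which is immediate from $q\le Kq_n$ being false in general — actually the cleaner route is that $Kq_n \le Kq_0 = q_1 \le q_0$ by monotonicity once $q_n\le q_0$ is known, so the chain $q\le q_{n+1}\le q_n\le\cdots\le q_0$ is maintained throughout the induction.

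Next I would extract the limit. For each fixed $x\in[0,1]$ the sequence $\{q_n(x)\}$ is monotone decreasing and bounded below by $q(x)$, so it converges pointwise to some $q_\infty(x)$ with $q\le q_\infty\le q_0$. The work is to upgrade this to convergence in a topology strong enough to pass to the limit inside $K$. Here I would use the uniform bounds: $\|q_n\|_{C^1(0,1)}$ — hmm, the iteration does not obviously preserve a uniform $C^1$ bound, so instead I would argue through the PDE. By Lemma \ref{lemma_smooth} (with the weak hypothesis $q_n\in L^\infty$, and $\|q_n\|_{L^\infty}\le \|q_0\|_{L^\infty}$ uniformly), $\|u(\cdot,\cdot;q_n)\|_{W_p^{2,1}}\le C$ uniformly in $n$; by Lemma \ref{lemma_regularity}, $\|\partial^2_{xt}u(\cdot,\cdot;q_n)\|_{L^\infty}$ is uniformly bounded. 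Dominated convergence then shows $q_n\to q_\infty$ in $L^2(0,1)$ (the monotone pointwise convergence plus uniform boundedness suffices). To show $q_\infty$ is a fixed point, I would show $u(\cdot,\cdot;q_n)\to u(\cdot,\cdot;q_\infty)$ in a suitable sense: the difference $z_n := u(\cdot,\cdot;q_n)-u(\cdot,\cdot;q_\infty)$ solves a parabolic problem with source $(q_\infty-q_n)u_x(\cdot,\cdot;q_\infty)$ and zero data, so Lemma \ref{lemma_regularity_parabolic} gives $\|z_n(\cdot,T)\|_{L^2}\le C\|(q_\infty-q_n)u_x(\cdot,\cdot;q_\infty)\|_{L^2((0,1)\times(0,T))}\to 0$, and similarly for $\partial_t z_n$; therefore $Kq_n\to Kq_\infty$ in $L^2$. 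But $Kq_n = q_{n+1}\to q_\infty$, so $Kq_\infty = q_\infty$, i.e. $q_\infty$ is a fixed point in $\mathcal D$.

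Finally, since $q\le q_\infty$ and both are fixed points, Lemma \ref{lemma_uniqueness} forces $q=q_\infty$; hence $q_n\downarrow q$ as claimed. Moreover, if $\tilde q$ is any other fixed point in $\mathcal D$, the same iteration starting from $q_0$ converges decreasingly to $\tilde q$ as well, so $q=q_\infty=\tilde q$, giving uniqueness of the solution to \eqref{inverse_problem} in $\mathcal D$.

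I expect the main obstacle to be the passage to the limit inside the operator $K$ — specifically, justifying that $q_n\to q_\infty$ in $L^2$ is enough to get $\partial_t u(\cdot,T;q_n)\to\partial_t u(\cdot,T;q_\infty)$, which is where the regularity Lemmas \ref{lemma_regularity_parabolic}, \ref{lemma_regularity}, \ref{lemma_smooth} do the heavy lifting (in particular the uniform-in-$n$ bounds that let dominated convergence and the linear-stability estimate apply). The monotonicity and boundedness parts are routine consequences of Lemmas \ref{lemma_monotone} and \ref{lemma_uniqueness} once the $\mathcal D$-membership of each $q_n$ is checked.
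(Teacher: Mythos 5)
Your route is essentially the paper's: iterate from the upper endpoint $q_0$ of $\mathcal D$, use Lemma \ref{lemma_monotone} to obtain a decreasing sequence bounded below by the fixed point $q$, upgrade the pointwise (monotone) convergence to $L^2$ convergence, pass to the limit inside $K$ via the stability estimates of Lemmas \ref{lemma_regularity_parabolic}, \ref{lemma_regularity}, \ref{lemma_smooth}, and finish with Lemma \ref{lemma_uniqueness}; the final uniqueness argument (both solutions are limits of the same sequence) is also the paper's. However, there is one step whose justification as written fails: the claim $q_{n+1}=Kq_n\le q_0$. You propose to get it from ``$Kq_n\le Kq_0=q_1\le q_0$ by monotonicity,'' but the inequality $q_1\le q_0$, i.e.\ $Kq_0\le q_0$, is precisely the base case you need, and monotonicity alone can never produce it — Lemma \ref{lemma_monotone} only compares $K$-images of an already ordered pair, so the argument is circular at $n=0$. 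The missing ingredient is Lemma \ref{lemma_positivity}: since $\partial_t u(\cdot,T;\psi)\ge 0$ and $g'>0$, one has $K\psi=q_0-\partial_t u(\cdot,T;\psi)/g'\le q_0$ for every admissible $\psi$; in particular $q_1\le q_0$ (this is exactly how the paper argues), and with that base case your induction $q\le q_{n+1}\le q_n\le\cdots\le q_0$ closes, since for $n\ge1$ the chain $q_{n+1}=Kq_n\le Kq_{n-1}=q_n$ is legitimate.

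A smaller gloss: you declare the pointwise limit $q_\infty$ to be ``a fixed point in $\mathcal D$,'' but a decreasing pointwise limit of $C^1$ functions is a priori only in $L^\infty(0,1)$, while Lemma \ref{lemma_uniqueness} is formulated for fixed points in $\mathcal D\subset C^1$. The paper patches this explicitly: $Kq_\infty\in C^1([0,1])$ (Lemma \ref{lemma_smooth}, smoothness of $f,g$, and $g'>m>0$), and $\|Kq_\infty-q_\infty\|_{L^2}=0$ lets one identify $q_\infty$ with this smooth representative before invoking Lemma \ref{lemma_uniqueness}; you should include that remark. Apart from these two points, your limit passage — uniform-in-$n$ bounds on $\partial^2_{xt}u$ from Lemma \ref{lemma_regularity} so that the source $(q_n-q_\infty)\partial^2_{xt}u$ is controlled by $\|q_n-q_\infty\|_{L^2}$, then the energy estimate of Lemma \ref{lemma_regularity_parabolic} applied to the equation for the difference of $\partial_t u$ — is the same computation the paper performs.
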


\begin{proof}
 Given $q\in \mathcal D$ be the solution of inverse problem \eqref{inverse_problem}, it should be one fixed point of $K$. From \eqref{iteration} we see that the initial $q_0\in\mathcal D$ is the upper bound of the domain $\mathcal D$. Then it gives that $q\le q_0$, which leads to $q=Kq\le Kq_0=q_1$ by Lemma \ref{lemma_monotone}. Also, from the nonnegativity of $\partial_t u(x,T;q_0)$, which comes from Lemma \ref{lemma_positivity}, we have $q_1\le q_0$; while the smoothness of $q_1$ is ensured by Lemma \ref{lemma_smooth}. So we proved that $q_1\in\mathcal D$ and $q\le q_1\le q_0$. Applying Lemma \ref{lemma_monotone} again, it holds that 
 \begin{equation*}
  q=Kq\le Kq_1=q_2\le Kq_0=q_1\le q_0.
 \end{equation*}
Keeping on this argument, we conclude that $\{q_n\}_{n=0}^\infty$ is a decreasing sequence and has a lower bound $q$, which yields the pointwise convergence of $\{q_n\}_{n=0}^\infty$. We denote the limit by $\tilde q$ and obviously $q\le \tilde q\le q_0$. 

Next we need to prove $q=\tilde q$. From the monotone convergence theorem, we have that $\|q_n-\tilde q\|_{L^2(0,1)}\to 0$. With triangle inequality, we have 
\begin{align*}
 \|K\tilde q-\tilde q\|_{L^2(0,1)}&\le \|K\tilde q-Kq_n\|_{L^2(0,1)}+\|Kq_n-\tilde q\|_{L^2(0,1)}\\
 &=\|K\tilde q-Kq_n\|_{L^2(0,1)} +\|q_{n+1}-\tilde q\|_{L^2(0,1)}\\
 &=:I_1+I_2.
\end{align*}
It is obvious that $I_2\to 0$ as $n\to \infty$. For $I_1$, with Lemma \ref{lemma_positivity} we have 
\begin{align*}
 I_1=\|[\partial_t u(x,T;\tilde q)-\partial_t u(x,T;q_n)]/g'(x)\|_{L^2(0,1)}
 \le C \|\partial_t u(x,T;\tilde q)-\partial_t u(x,T;q_n)\|_{L^2(0,1)},
\end{align*}
where the constant $C$ depends on $m$. Setting $w=\partial_t u(x,t;\tilde q)-\partial_t u(x,t;q_n)$, we have 
\begin{equation*}
 \begin{cases}
  \begin{aligned}
   (\partial_t-\partial_x^2+\tilde q\partial_x+C_p)w(x,t)
   &=(q_n-\tilde q)\partial_{xt}^2 u(x,t;q_n)\ge0, &&(x,t)\in(0,1)\times(0,T];\\
   w_x(0,t)&=0,&&t\in(0,T];\\
   w_x(1,t)&=0,&&t\in(0,T];\\
   w(x,0)&=(q_n-\tilde q)v'\ge0,&&x\in(0,1).\\
  \end{aligned}
 \end{cases}
\end{equation*}
Then from Lemmas \ref{lemma_regularity_parabolic} and \ref{lemma_regularity}, it holds that 
\begin{align*}
 \|w(\cdot,T)\|_{L^2(0,1)}&\le C\|v'(q_n-\tilde q)\|_{L^2(0,1)}+ C\|\partial_{xt}^2 u(x,t;q_n)(q_n-\tilde q)\|_{L^2((0,1)\times(0,T))}\\
 &\le C\|q_n-\tilde q\|_{L^2(0,1)},    
\end{align*}
where the last constant $C$ depends on $b_1'(t)$, $b_2'(t)$ and $v'(x)$. So $I_1\le C\|q_n-\tilde q\|_{L^2(0,1)}$, which converges to zero as $n\to \infty$. To sum up, we could deduce that $\|K\tilde q-\tilde q\|_{L^2(0,1)}=0$, which gives that $\tilde q$ is a fixed point of $K$. Note that the pointwise convergence only ensures that $\tilde q\in L^\infty(0,1)$. Lemma \ref{lemma_smooth} and Assumption \ref{assumption} yield that $K\tilde q\in C^1([0,1])$. From the result $\|K\tilde q-\tilde q\|_{L^2(0,1)}=0$, we could pick the smooth version of $\tilde q$ and it gives that $\tilde q\in C^1$. 
Now we have proved that $q$ and $\tilde q$ are both fixed points of $K$ in $\mathcal D$ and they satisfy $q\le \tilde q$. Applying Lemma \ref{lemma_uniqueness}, we have $q=\tilde q$. 

The uniqueness could be deduced from above result and Lemma \ref{lemma_equivalence}. Given two solutions $q$ and $\hat q$ in $\mathcal D$ of inverse problem \eqref{inverse_problem}, it holds that $q_n\to q$ and $q_n\to \hat q$, which leads to $q=\hat q$ and completes the proof. 
\end{proof}

\section{Numerical reconstructions.}\label{section_num}
In this section we would solve inverse problem \eqref{inverse_problem} numerically, using the iteration \eqref{iteration} and Theorem \ref{theorem_unique}. 

\subsection{Discretization method.}
Numerical methods are designed to effectively reconstruct the drift term. 
We consider a backward Euler scheme for the model (\ref{PDE}). And, traditional finite difference methods are used for space discretization. 

Some notations are defined. Let the step of space $h=\frac{1}{M}$, $x_i=ih$, $0\le i\le M$, the step of time $\tau=\frac{T}{N}$, $t_n=n\tau$, $0\le n\le N$. For a grid function $u=\{u_i|0\le i\le M\}$, denote 
$$
Lu_i=\left\{
\begin{aligned}
\frac{1}{h}(u_{1}-u_{0})&,~~i=0,\\
\frac{1}{\tau}u_i&,~~1\le i\le M-1,\\  
\frac{1}{h}(u_{M}-u_{M-1})&,~~i=M.
\end{aligned}
\right.~~
\delta_x u_i=\left\{
\begin{aligned}
0&,~~i=0,\\
\frac{1}{2h}(u_{i+1}-u_{i-1})&,~~1\le i\le M-1,\\  
0&,~~i=M.
\end{aligned}
\right.
$$

$$
Iu_i=\left\{
\begin{aligned}
0&,~~i=0,\\
u_i&,~~1\le i\le M-1,\\  
0&,~~i=M.
\end{aligned}
\right.~~
\delta_x^2 u_i=\left\{
\begin{aligned}
0&,~~i=0,\\
\frac{1}{h^2}(u_{i+1}-2u_{i}+u_{i-1})&,~~1\le i\le M-1,\\  
0&,~~i=M.
\end{aligned}
\right.
$$

Let $u_i^n$ be the numerical solution at mesh point $(x_i,t_n)$, we have the discrete scheme:
\begin{align}\label{ds}
(L-\delta_x^2+q(x_i)\delta_x+C_pI)u_i^n=\frac{1}{\tau}Iu_i^{n-1}+\hat{f}(x_i,t_n),~~0\le i\le M,~1\le n\le N,
\end{align}
where $\hat{f}(x_i,t_n)=f(x_i)$, $1\le i\le M-1$, $\hat{f}(x_0,t_n)=b_1$ and $\hat{f}(x_M,t_n)=b_2(t_n)$.

\subsection{Mollification of noise data.}
In the previous section, theoretical results are deduced on the basis of noise-free measurement $g(x):=u(x,T)$.
However, most of the time the measurement is a set of noise data. It needs to be pre-processed before using the iterative algorithm to find the drift term.

Suppose that the data sampling points $\{x_i\}_{i=1}^K$ are evenly distributed in space $[0,1]$. Under the assumption of normal distribution, the noise of measurement ${g^\delta(x_i)}:=u(x_i,T)+e_i$ satisfies $\mathbb{E}[e_i]=0$, $\mathbb{E}[e_i^2]=\delta^2$. 
To denoise the noise data using the regularization method, we try to find $g^*$, which satisfies 
\begin{align}\label{reg-1}
\min\limits_{g} \|Ag-g^\delta\|^2 + \lambda\|\Gamma g\|^2, ~~\lambda>0,   
\end{align}
where $\|g\|=\sqrt{\sum_{i=0}^{K+1}g^2(x_i)}$, $A$ is the design matrix and $\Gamma$ is a the regularization matrix. 

We introduce the details of the regularization method (\ref{reg-1}) to technical instruct the numerical experiments. 
Combining with the boundary conditions in (\ref{PDE}), the design matrix is as follows:
\[
A=\begin{bmatrix}
-1 &    1&       &   &    \\
  &    1 &       &   &    \\
  &      &   \ddots  &     \\
  &      &       &   1&   \\
  &      &       &  -1&  1
\end{bmatrix}_{K,K}.
\]
The corresponding measurements $g$ and $g^\delta$ are given:
\[
g=\begin{bmatrix}
g(x_0)      \\
g(x_1)      \\
\vdots      \\
g(x_M)      \\
\end{bmatrix}_{K,1},~~~
\tilde{g}^\delta=\begin{bmatrix}
 hb_1(T)   \\
g^\delta(x_1)  \\
\vdots      \\
 hb_2(T) 
\end{bmatrix}_{K,1}.
\]
The regularization matrix $\Gamma$ is introduced as follows:
\[
\Gamma=\frac{1}{(K-1)^2}\begin{bmatrix}
1  &    -2 &      1    &   &    \\
  &      &   \ddots    &     \\
  &      &       1&   -2&  1 
\end{bmatrix}_{K-2,K}.
\]
The point of interest is to find the optimal regularization parameter $\lambda$. The details of the choice for it refer to \cite{CenZhang:2025,ChenTuoZhang:2018}. It is worth noting that data mollification is important for numerical inversion, see Figures \ref{ex3_d_e_w} and \ref{ex3_e_w} for Ex. \ref{ex3}.

\subsection{Numerical experiments.}
From the iteration method (\ref{iteration}), the corresponding discrete form of it is as following:
\begin{equation}\label{iteration-ds}
 q_0(x_i)=[f(x_i)+\delta_x^2g(x_i)-C_pg(x_i)]/\delta_xg(x_i),~~ 1\le i\le M-1,
\end{equation}
when $i=0$ or $M$, $g_{x}$ and $g_{xx}$ are obtained by the linear interpolation based on the results of nearby inner points.
Then $q_{n+1}$ is given from $Kq_n$, and $\partial_tu(x_i,T;q_n)$ in the iterative operator is $\frac{u(x_i,T;q_n)-u(x_i,T-\tau;q_n)}{\tau}$. 

\begin{example}\label{ex1}
The source, initial function, boundary conditions, the potential and the finial time are set: $T=1$, $C_p=5$, $v(x)=\sin(\pi x)$, $b_1(t)=1$, $b_2(t)=1+t$, $f(x)=10+10x$. We try to reconstruct the following smooth drift functions:
$$(a)~q=\sin(x),~~(b)~q=\begin{cases}x^2, &0\leq x\leq \frac12\\
-x^2+2x-\frac12, &\frac12 < x\leq 1\end{cases}.$$ 
\end{example}
The case of noise-free measurement $g(x)$ is considered. The number of grids are $M=N=100$ for the numerical scheme (\ref{ds}). The number of observation data is $M$. Our aim is to check the effectiveness of our iterative method (\ref{iteration}).  The numerical results are presented in Figures \ref{ex1_ab_q} and \ref{ex1_ab_q_1}.  It shows that one only needs to solve the forward problem twice to get a satisfied drift term. 

\begin{figure}[h!]
		\centering
		\setlength{\abovecaptionskip}{0.2cm}
        \includegraphics[width=0.7\linewidth]{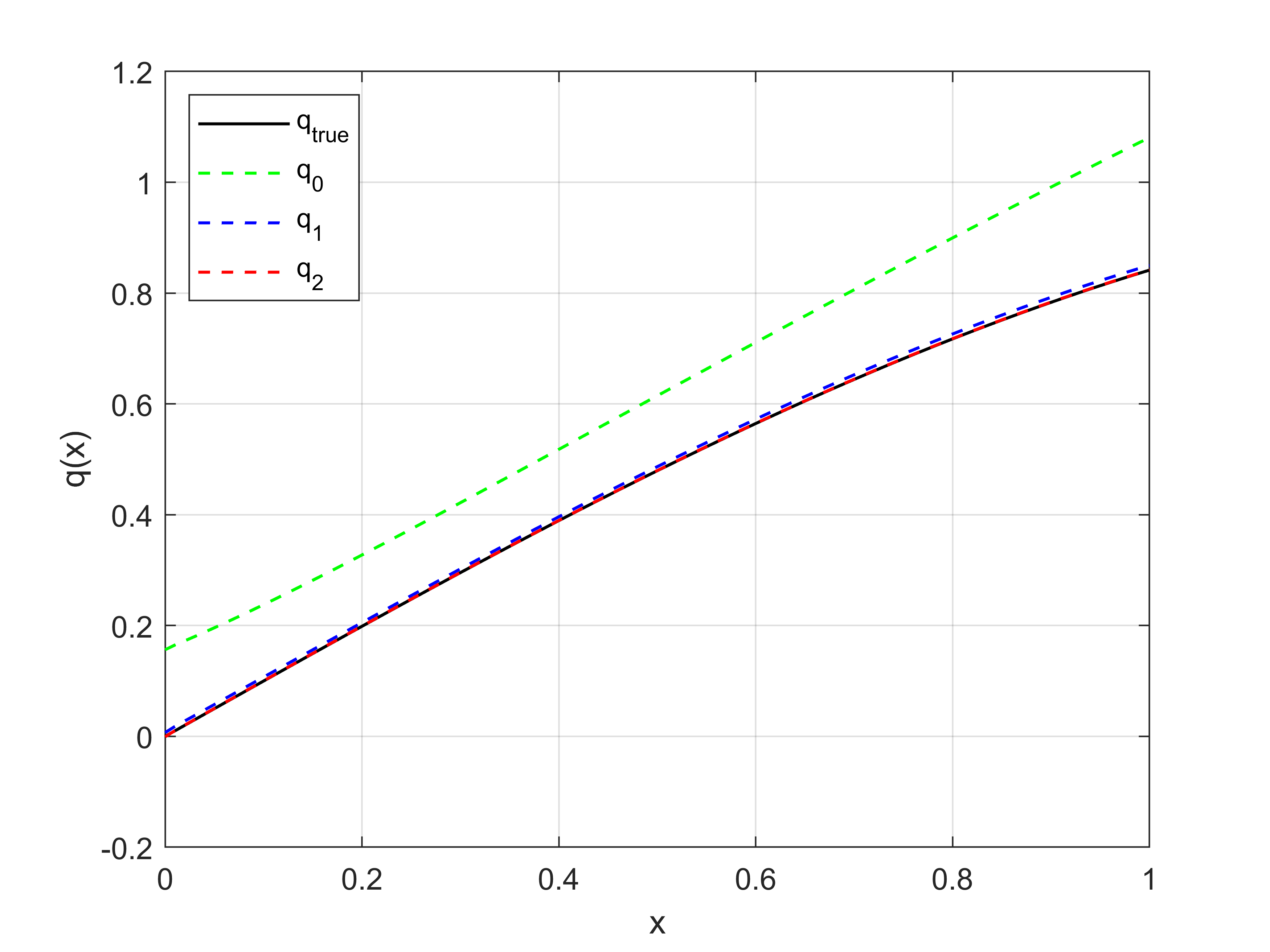}      
         \caption{Recover $q(x)$ of Ex. \ref{ex1} (a).}
         \label{ex1_ab_q}
\end{figure}

Drifts (a) and (b) in Ex. \ref{ex1} exhibit different regularity. While $q\in C^\infty(0,1)$ for drift (a), the derivative is not continue at $x=\frac12$ for drift (b).

\begin{figure}[h!]
		\centering
		\setlength{\abovecaptionskip}{0.2cm}
        \includegraphics[width=0.7\linewidth]{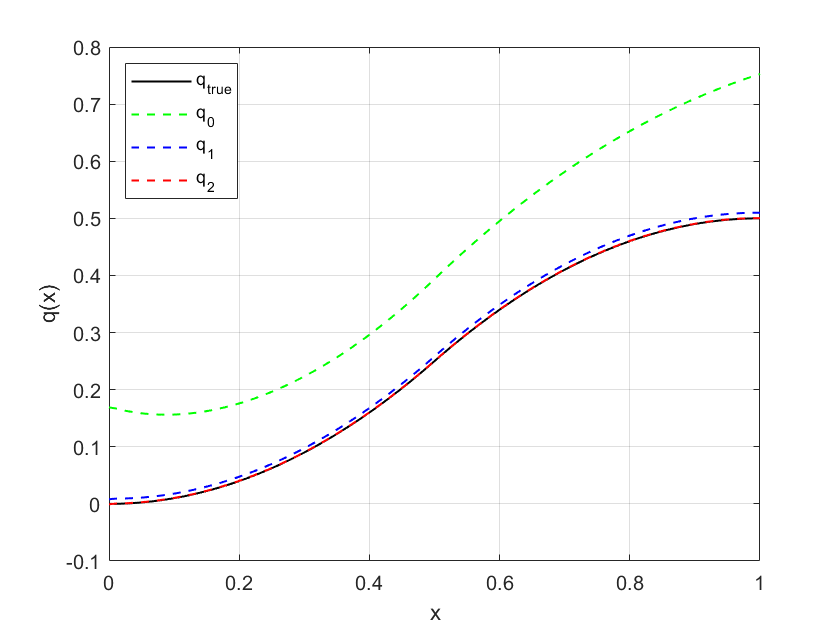}      
         \caption{Recover $q(x)$ of Ex. \ref{ex1} (b).}
         \label{ex1_ab_q_1}
\end{figure}

\begin{example}\label{ex1-vs}
The finial time $T=0.5$. The source, initial function, boundary conditions, and potential are the same as in Example \ref{ex1}. We also consider the case of noise-free measurement $g(x)$. The following drift term with singularity points is recovered.
$$(c)~q=\begin{cases}x, &0\leq x\leq \frac12\\
1-x, &\frac12 < x\leq 1\end{cases},~~(d)~q=\begin{cases}20|x-\frac{1}{10}|-1, &0\leq x\leq \frac15\\
20|x-\frac{3}{10}|-1, &\frac15< x\leq \frac25\\
20|x-\frac{1}{2}|-1, &\frac25< x\leq \frac35\\
20|x-\frac{7}{10}|-1, &\frac35< x\leq \frac45\\
20|x-\frac{9}{10}|-1, &\frac45< x\leq 1\end{cases}.$$
\end{example}

Despite the derivative discontinuity in the middle for drift (c), the method achieves inversion. Furthermore, it also could present good inversion result by the iterative method for periodic absolute value function (d).

\begin{figure}[h!]
		\centering
		\setlength{\abovecaptionskip}{0.2cm}
        \includegraphics[width=0.7\linewidth]{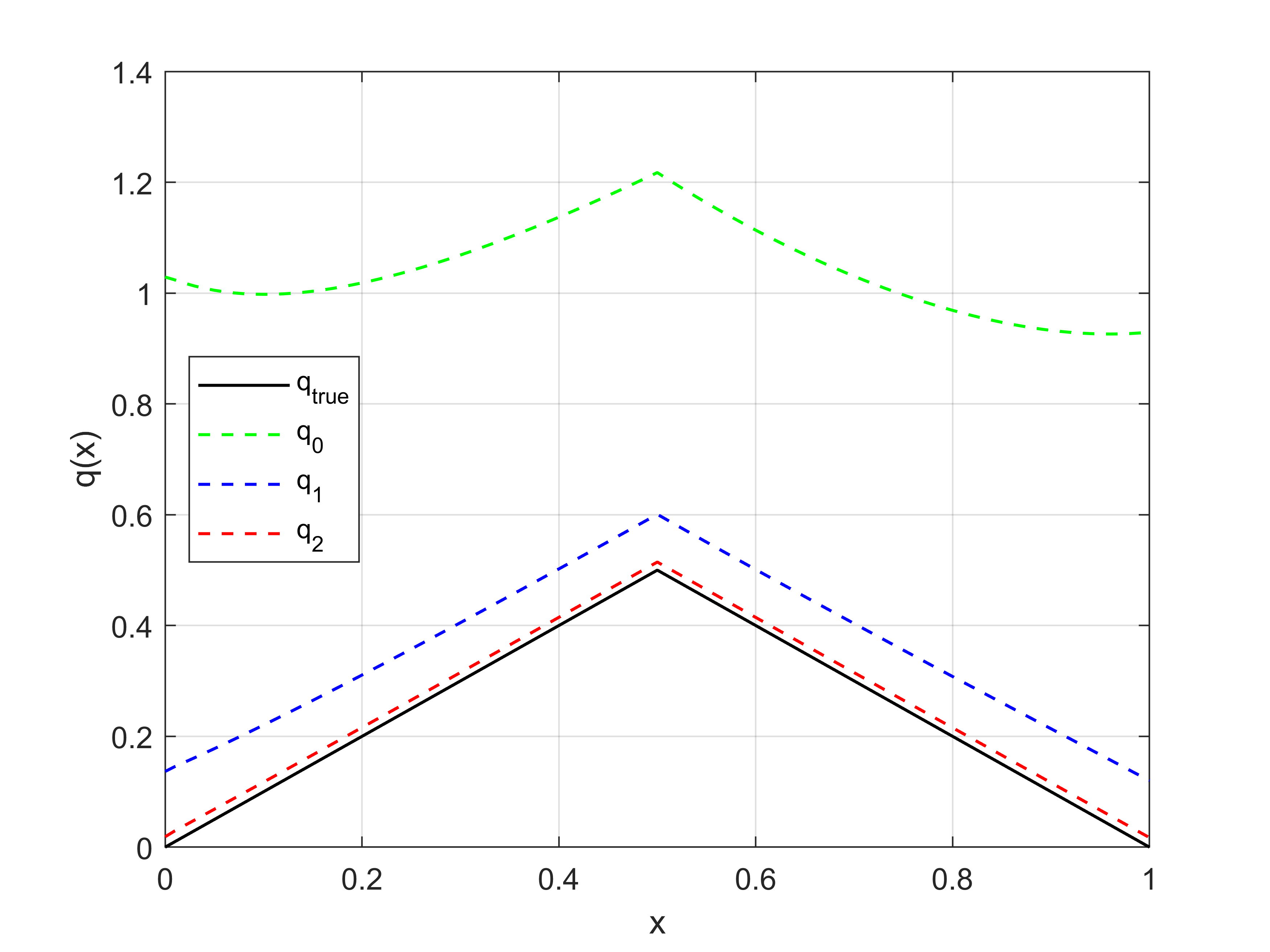}      
         \caption{Recover $q(x)$ of Ex. \ref{ex1-vs} (c).}
         \label{ex2_ab_q_vs}
\end{figure}

\begin{figure}[h!]
		\centering
		\setlength{\abovecaptionskip}{0.2cm}
        \includegraphics[width=0.7\linewidth]{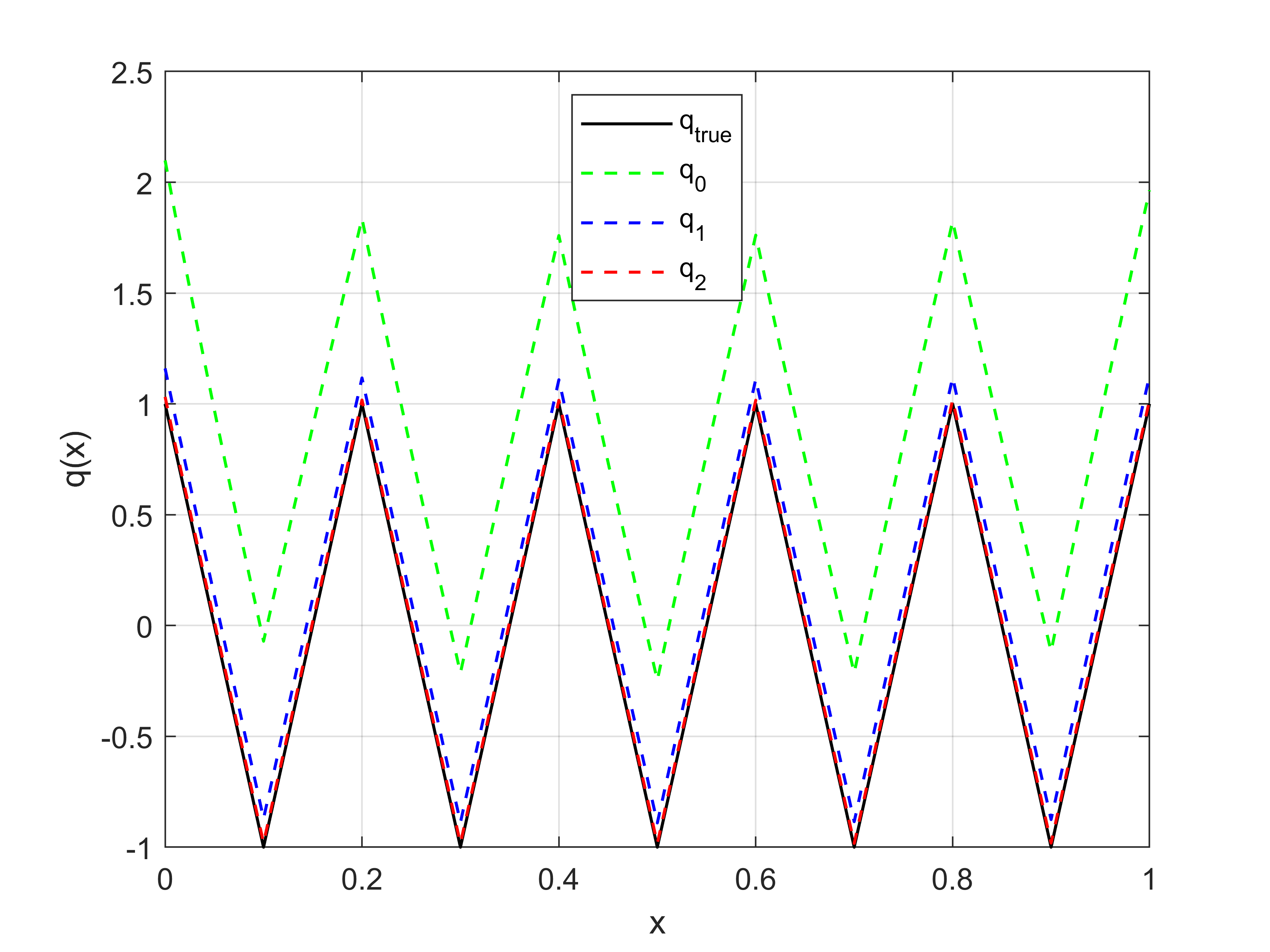}      
         \caption{Recover $q(x)$ of Ex. \ref{ex1-vs} (d).}
         \label{ex2_ab_q_vs_1}
\end{figure}

\begin{example}\label{ex3}
The source, boundary conditions, potential, and finial time are the same as in Example \ref{ex1}. More general drift terms $q$ are considered.
$$
(e)~q=\begin{cases}0, &0\leq x\leq \frac14\\
1, &\frac14 < x\leq \frac12\\
0, &\frac12 < x\leq \frac34\\
1, &\frac34 < x\leq 1\\
\end{cases},~~(f)~q=\begin{cases}-1, &0\leq x< 0.2\\
0.5x, &0.2 \le x\leq 0.8\\
-1, &0.8 < x\leq 1\\
\end{cases}.$$ 
For case (e), $q\in L^\infty([0,1])$ is a staircase function. Two kinds of measurements $g^\delta(x)$ with different noise levels $\delta=1\%$, $3\%$ are used for inversion.
\end{example}

The number of grids are $N=80$, $M=20$ for the numerical scheme (\ref{ds}). The number of observation data is $K=10^7$. We use the iteration method (\ref{iteration}) to recover the drift terms. Figure \ref{ex3_d} shows that our method is still effective for noise data after mollification.

\begin{figure}[h!]
		\centering
        \begin{minipage}{0.45\linewidth}
		\setlength{\abovecaptionskip}{0.2cm}
        \includegraphics[width=1\linewidth]{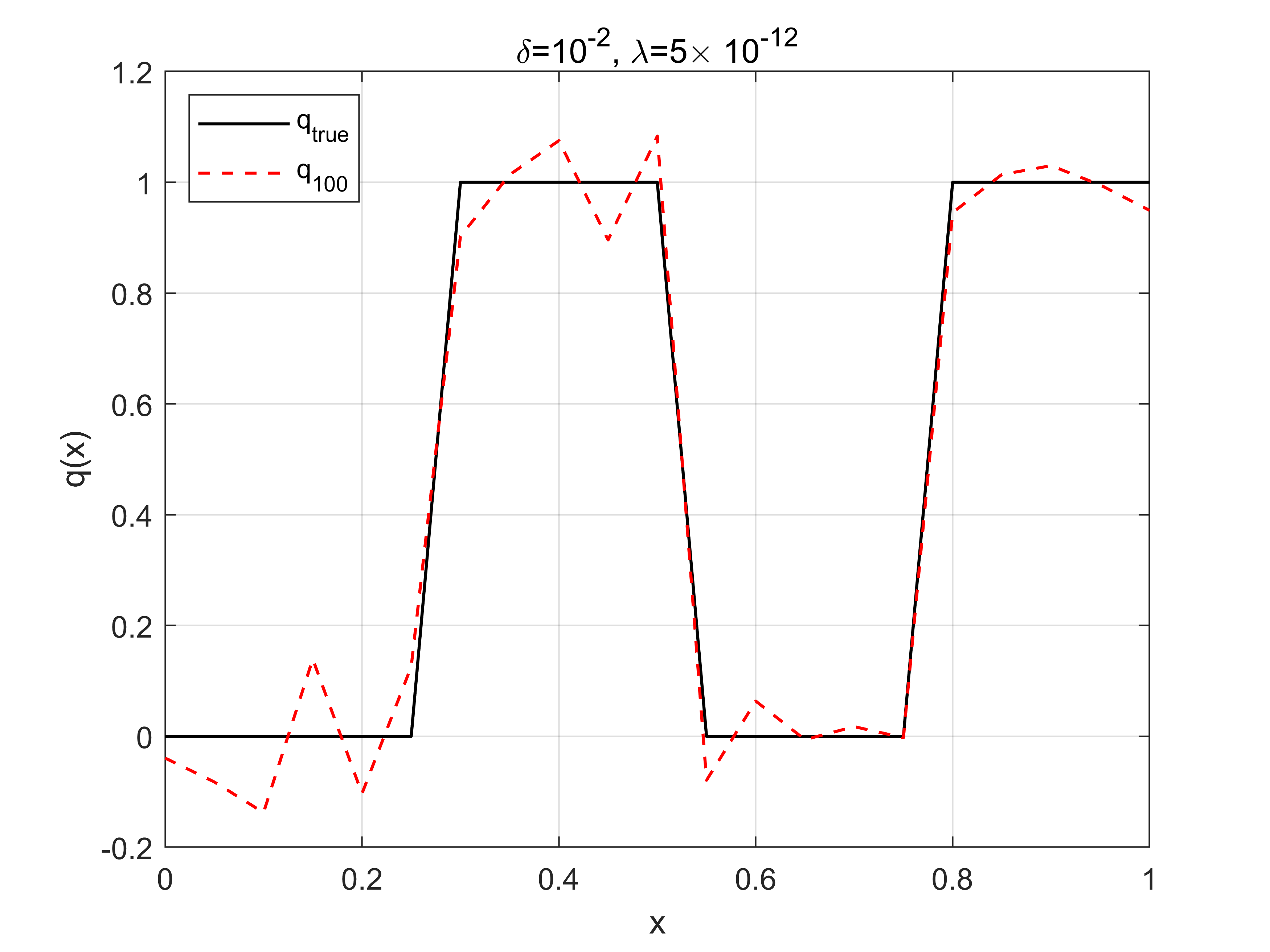}      
	    \end{minipage}
        \begin{minipage}{0.45\linewidth}
		\setlength{\abovecaptionskip}{0.2cm}
        \includegraphics[width=1\linewidth]{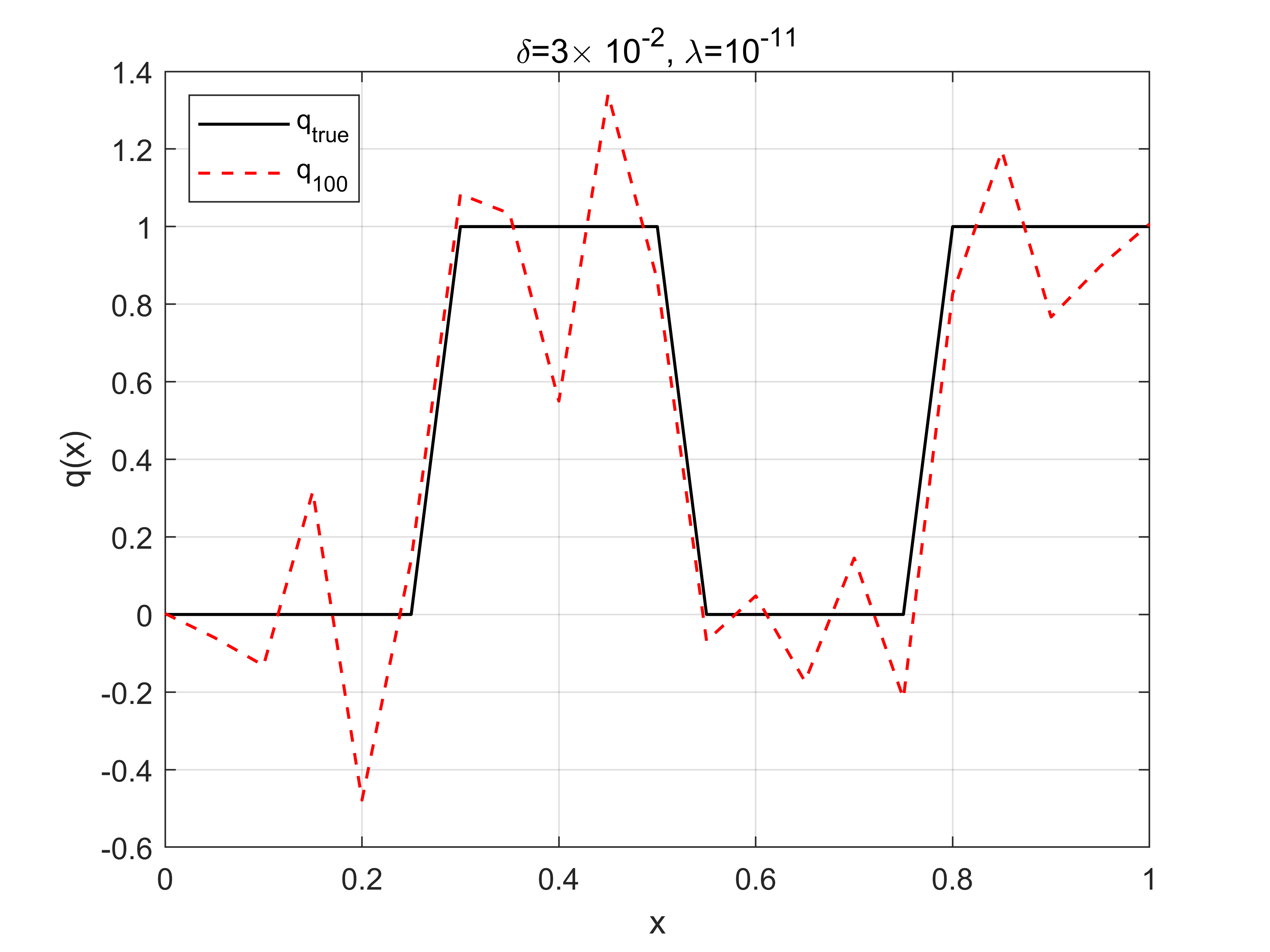}
	    \end{minipage}
         \caption{Recover $q(x)$ of Ex. \ref{ex3} (e) with noise, [1\%, left] and [3\%, right].}
         \label{ex3_d}
\end{figure}

\begin{figure}[h!]
		\centering
        \begin{minipage}{0.45\linewidth}
		\setlength{\abovecaptionskip}{0.2cm}
        \includegraphics[width=1\linewidth]{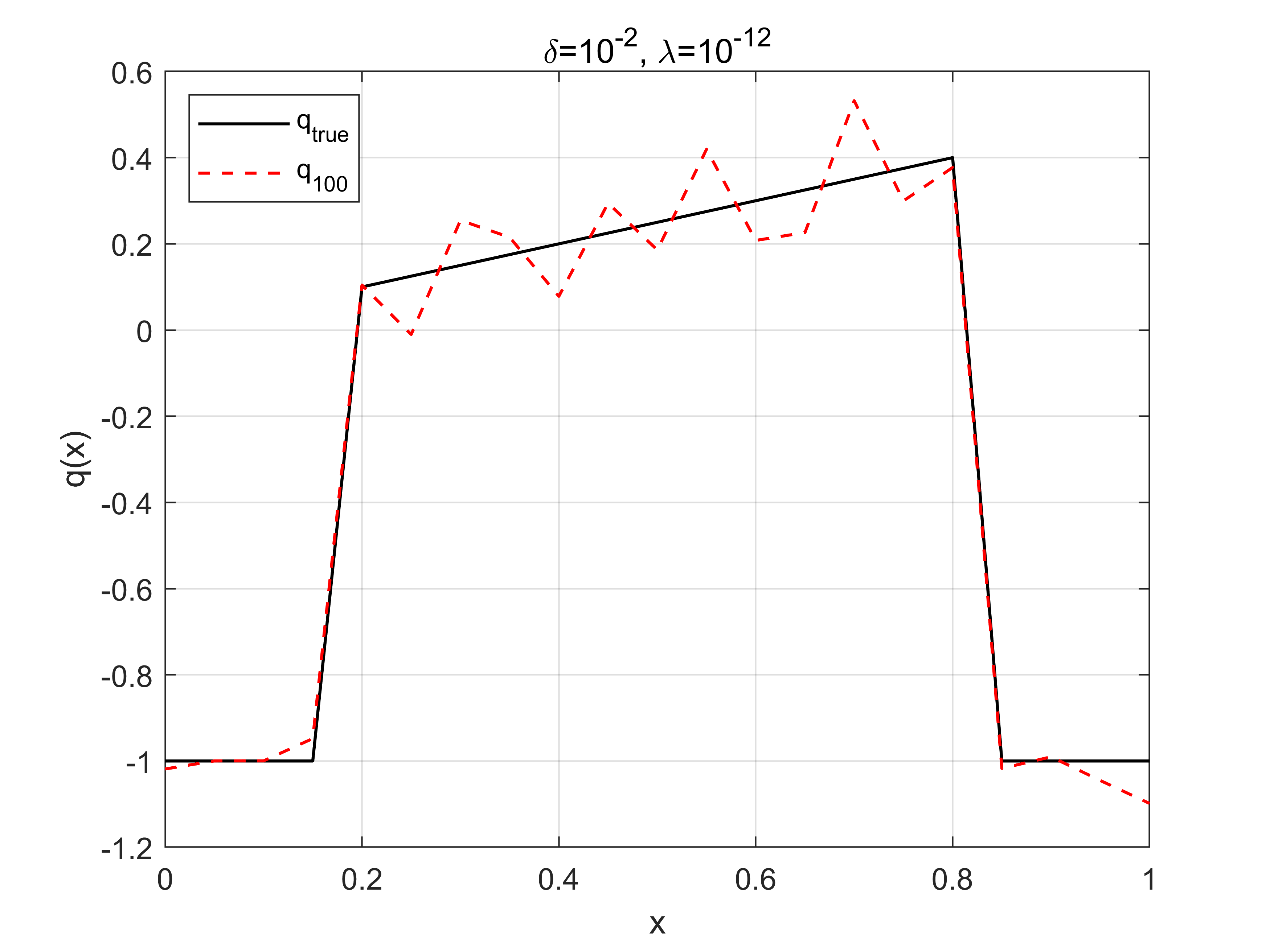}      
	    \end{minipage}
        \begin{minipage}{0.45\linewidth}
		\setlength{\abovecaptionskip}{0.2cm}
        \includegraphics[width=1\linewidth]{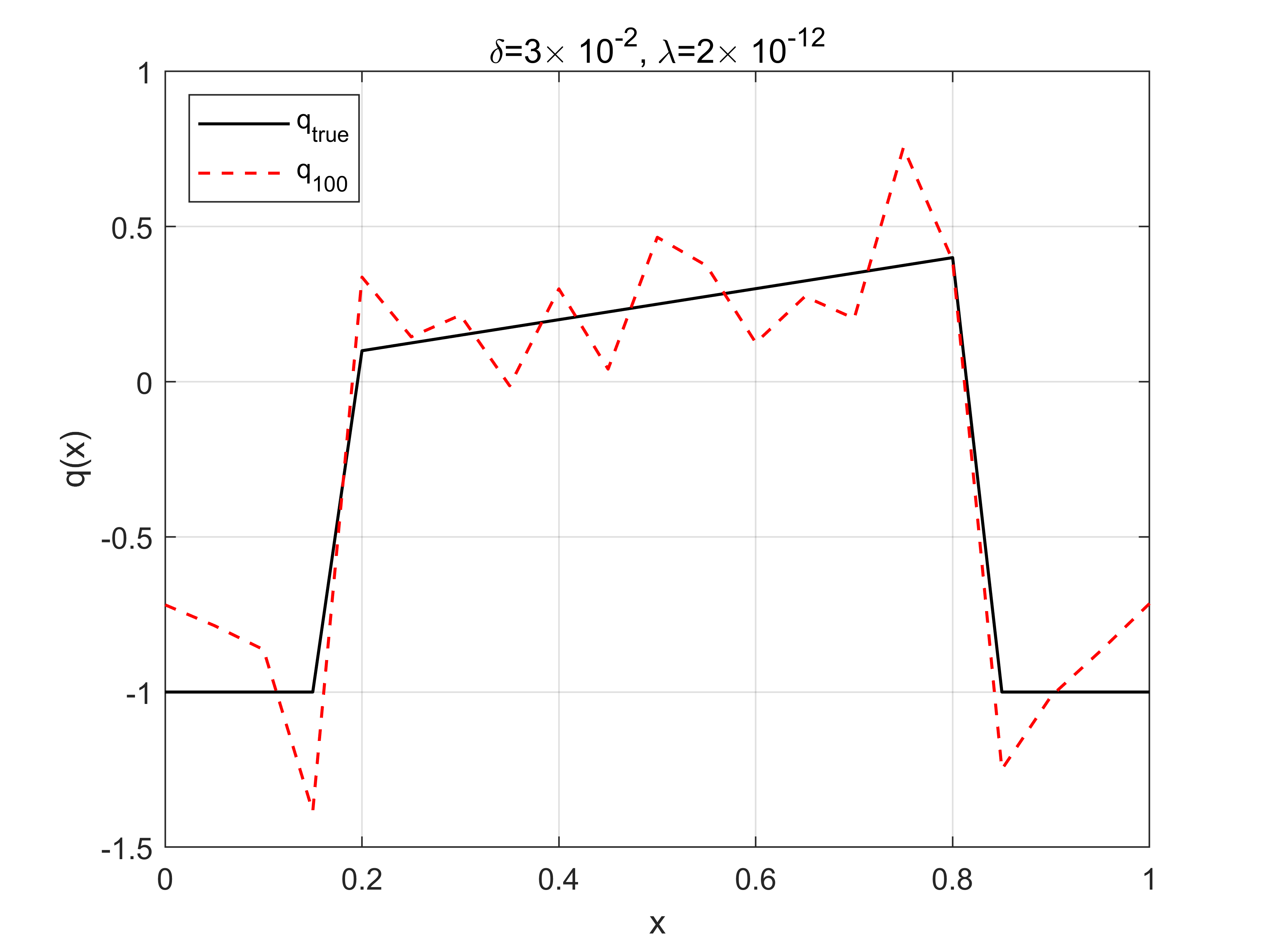}
	    \end{minipage}
         \caption{Recover $q(x)$ of Ex. \ref{ex3} (f) with noise, [1\%, left] and [3\%, right].}
         \label{ex3_e}
\end{figure}

\begin{figure}[h!]
		\centering
        \begin{minipage}{0.45\linewidth}
		\setlength{\abovecaptionskip}{0.2cm}
        \includegraphics[width=1\linewidth]{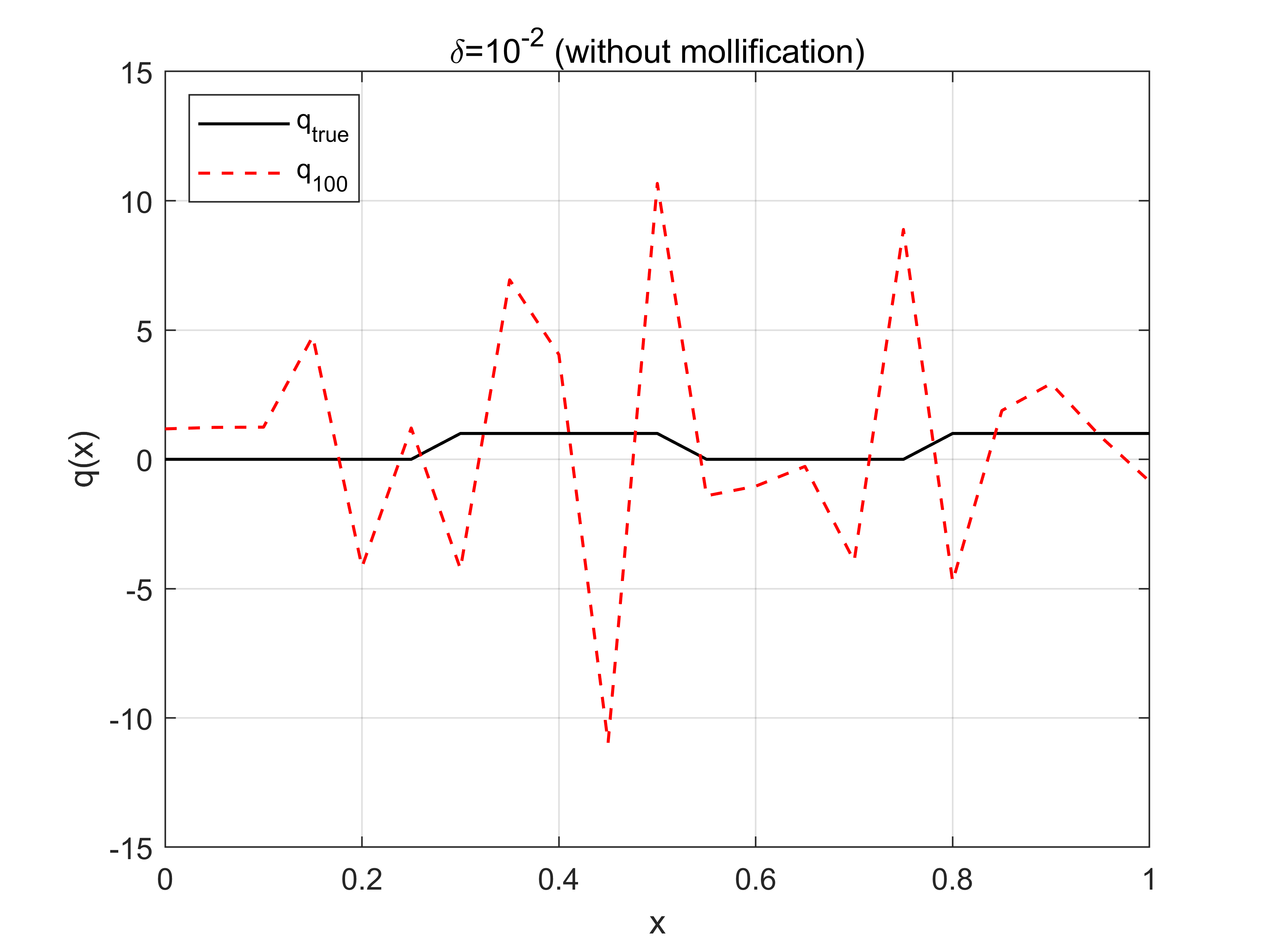}      
	    \end{minipage}
        \begin{minipage}{0.45\linewidth}
		\setlength{\abovecaptionskip}{0.2cm}
        \includegraphics[width=1\linewidth]{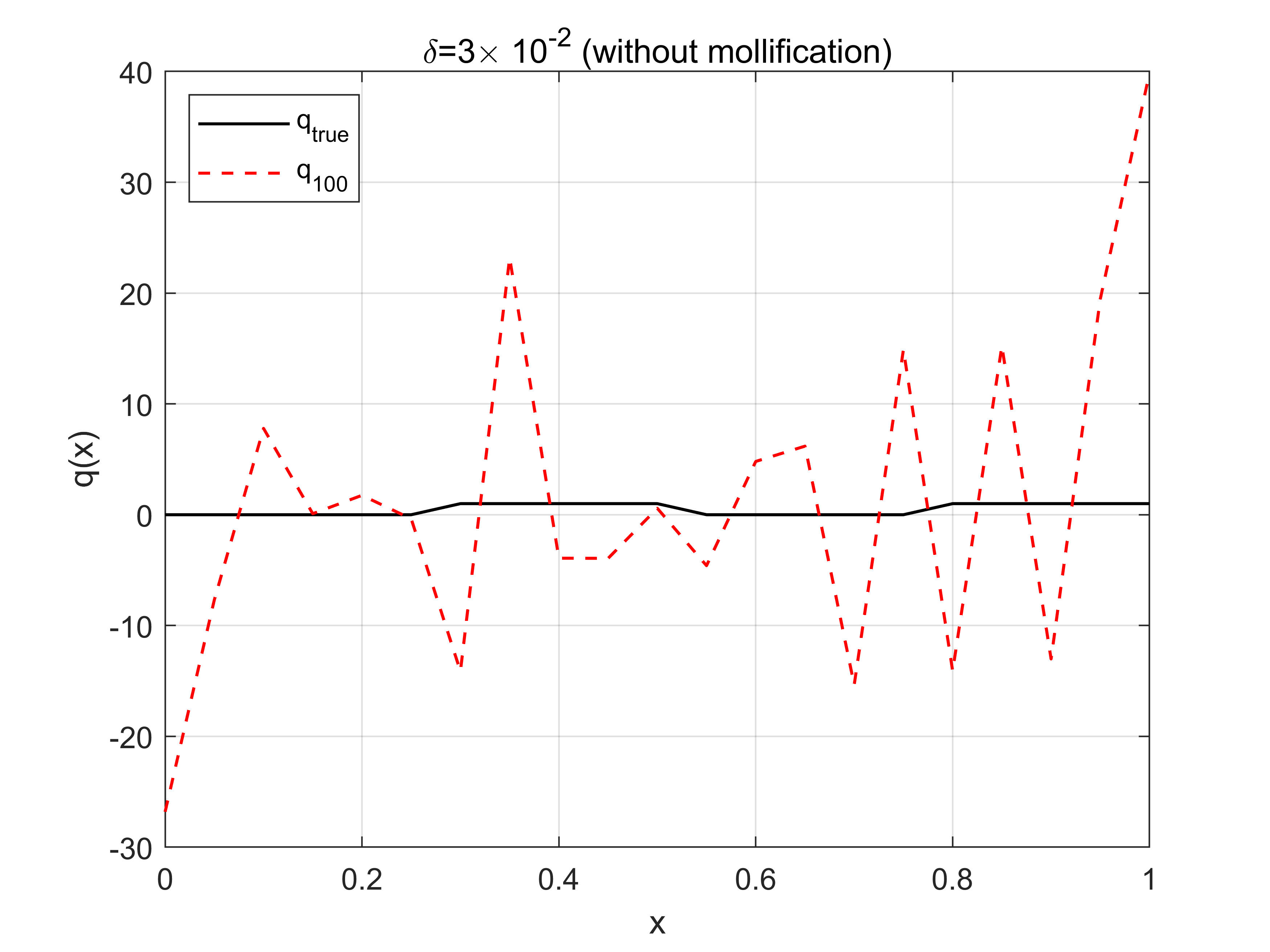}
	    \end{minipage}
        \caption{Recover $q(x)$ of Ex. \ref{ex3} (e) without mollification, [1\%, left] and [3\%, right].}
        \label{ex3_d_e_w}
\end{figure}

It is worth noting that data mollification is important for numerical inversion. Noisy data would seriously undermine the stability of numerical inversion, see Figures \ref{ex3_d_e_w} and \ref{ex3_e_w}.

\begin{figure}[h!]
		\centering
        \begin{minipage}{0.45\linewidth}
		\setlength{\abovecaptionskip}{0.2cm}
        \includegraphics[width=1\linewidth]{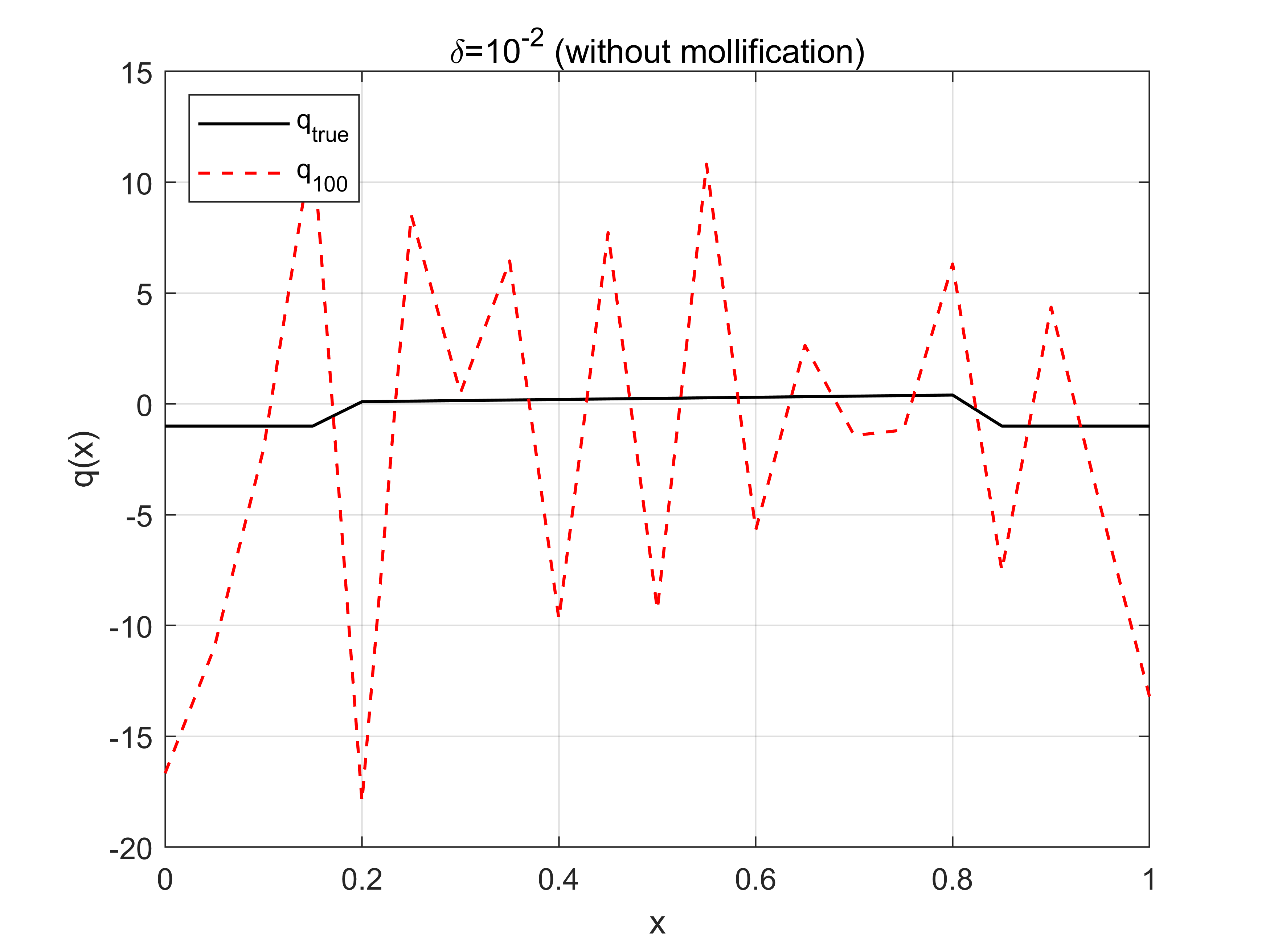}      
	    \end{minipage}
        \begin{minipage}{0.45\linewidth}
		\setlength{\abovecaptionskip}{0.2cm}
        \includegraphics[width=1\linewidth]{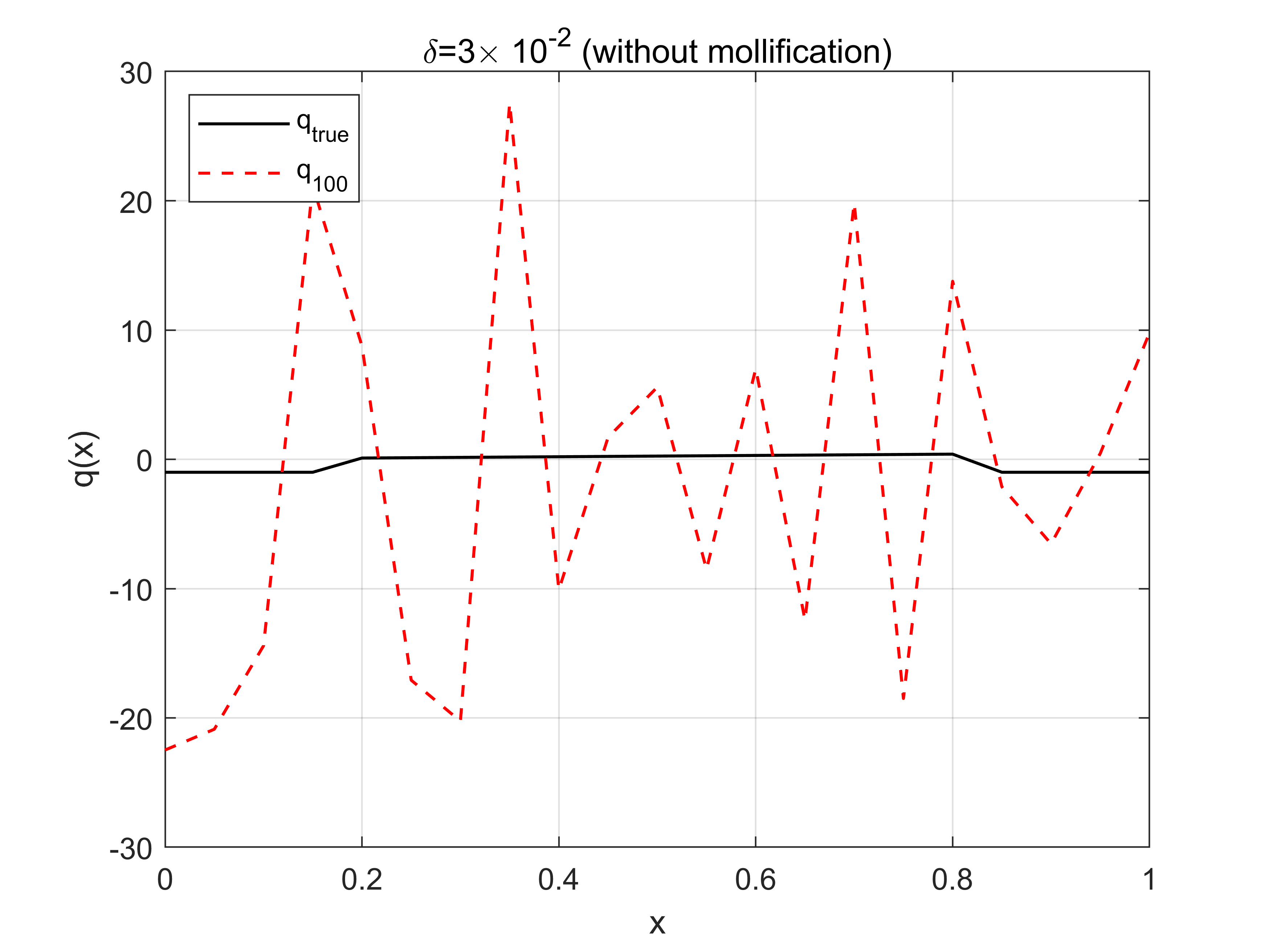}
	    \end{minipage}
        \caption{Recover $q(x)$ of Ex. \ref{ex3} (f) without mollification, [1\%, left] and [3\%, right].}
        \label{ex3_e_w}
\end{figure}

\section{Concluding remarks and future works.}\label{section_con}
In this manuscript, we solve the inverse drift problem in the one-dimensional parabolic equation by the monotone operator method. We first construct an operator from the equation and the data, whose fixed points are the desired drift. Then some properties of the operator are proved, such as equivalence and monotonicity, which are applied to prove the uniqueness theorem. We note that the proof of uniqueness theorem contains an iteration which converges to the solution of inverse problem. This leads to the iterative algorithm for solving the inverse problem numerically. To handle the ill-posedness of the inverse problem, we first add the mollification on the data and then take several experiments. The satisfactory numerical results indicate the effectiveness of the used algorithm.       

In the future, we would investigate the inverse drift problem in high-dimensional case, which should more challenging. In high-dimensional situation, the drift is the coefficient of the gradient of solution, which is represented as $q\cdot\nabla u$. We could see now the drift $q$ is a vector field. So we should overcome the difficulties as how to construct the operator $K$, how to control the boundary, initial conditions and source term to make sure the well-definedness of the operator, and so on.    

\section*{Acknowledgments.}
Zhidong Zhang is supported by the National Key Research and Development Plan of China (Grant No. 2023YFB3002400). Wenlong Zhang is partially supported by the National Natural Science Foundation
of China under grants 12371423 and 12241104.

\end{document}